\newtheorem{theorem}{Theorem}
\newtheorem{lemma}{Lemma}
\newtheorem{assum}{Assumption}
\newtheorem{rem}{Remark}
\begin{document}
\begin{frontmatter}
\title{ Two system transformation data-driven algorithms for linear quadratic mean-field games $^\dagger$}

\author[address1]{Xun Li}\ead{li.xun@polyu.edu.hk}     
\author[address2]{Guangchen Wang}\ead{wguangchen@sdu.edu.cn}    
\author[address2]{Yu Wang}\ead{wangyu1@mail.sdu.edu.cn}           
\author[address3,address4]{Jie Xiong}\ead{xiongj@sustech.edu.cn}
\author[address2]{Heng Zhang}\ead{zhangh2828@163.com,  zhangheng2828@mail.sdu.edu.cn}  

\address[address1]{Department of Applied Mathematics, The Hong Kong Polytechnic University, Hong Kong, China}                                               
\address[address2]{School of Control Science and Engineering, Shandong University, Jinan 250061, China}             
\address[address3]{Department of Mathematics, Southern University of Science and Technology, Shenzhen 518055, China}        
\address[address4]{SUSTech International Center for Mathematics, Southern University of Science and Technology, Shenzhen 518055, China}  




\begin{abstract}
 This paper studies a class of continuous-time linear quadratic (LQ) mean-field game problems. We develop two system transformation data-driven  algorithms to approximate the decentralized strategies of the LQ mean-field games. The main feature of the obtained data-driven algorithms is that they eliminate the requirement on all system matrices. First, we transform the original stochastic system into an ordinary differential equation (ODE). Subsequently, we construct some Kronecker product-based matrices by the input/state data of the ODE. By virtue of these matrices, we implement a model-based policy iteration (PI) algorithm and a model-based value iteration (VI) algorithm in a data-driven fashion. In addition, we also demonstrate the convergence of these two data-driven algorithms under some mild conditions.  Finally, we illustrate the practicality of our algorithms via two numerical examples.
\end{abstract}
\begin{keyword}
Linear quadratic (LQ) mean-field game; decentralized strategy; system transformation; policy iteration (PI); value iteration (VI)
	
\end{keyword}

\end{frontmatter}

\section{Introduction}

Game theory is the study of decision making in an interactive environment, which has been widely used in industry, management and other fields \cite{von Neumann and Morgenstern 1944,Baser1982,Gueant et al. 2011,Bensoussa et al. 2013}.  In game systems with a large number of agents, i.e., large-population systems, the influence of individual  agent behavior on the systems can be negligible, but the group behavior of agents has a significant impact on individual agent. In detail, states or cost functionals of agents are highly coupled through a state-average term, which brings curse of dimensionality and considerable computational complexity. Therefore,  the classical game theory will no longer be applicable for large-population systems.

Different from the classical game theory, Huang et al. \cite{Huang et al. 2006} and Lasry and Lions \cite{Lasry and Lions 2007} independently proposed a mean-field method to overcome difficulties caused by high coupling. Huang et al. \cite{Huang et al. 2006} studied collective behavior caused by individual interactions and utilized a mean-field term to represent the complex interaction information between agents and proposed decentralized strategies. Independently, the method proposed by Lasry and Lions \cite{Lasry and Lions 2007} entailed solving coupled forward-backward partial differential equations. Since then, mean-field game theory has developed rapidly and the game framework has been extended to various different settings. For instance, Huang et al. \cite{Huang et al. 2007} considered an LQ game system where each agent is weakly coupled with the other agents only through its cost functional. Based on a fixed-point method, they developed the Nash certainty equivalence and designed an $\epsilon$-Nash equilibrium for the infinite-horizon mean-field LQ game. Huang \cite{Huang 2010} further considered decentralized controls of a game system with a major player and a large number of minor players, where the major player has a significant influence on others. Utilizing a mean-field approximation, they decomposed the game problem in population limit into a set of localized limiting two-player games and derived all players' decentralized strategies by the Nash certainty equivalence approach. Readers may also refer to \cite{Bensoussan et al. 2016,Bensoussan et al. 2017,Moon and Basar 2018} for mean-field Stackelberg differential games, \cite{Moon and Basar 2017,Tembine et al. 2011} for risk-sensitive mean-field games, \cite{Huang and Wang 2015,Huang et al. 2023} for mean-field differential games with partial information, \cite{Wang et al. 2020} for a mean-field cooperative differential game.

On the other hand, reinforcement learning (RL), as a well-known numerical algorithm for studying LQ control and game problems, has received increasing attention from researchers. The main feature of RL algorithms is that they do not require or partially require the system matrix information, and thus may have higher practical value. See \cite{Vrabie2009,JiangJiangPI2012,BianJiangVI2016,Lee2012} for some classical RL algorithms related to deterministic LQ control problems and \cite{LiXu2020,ZhangLi2023,Zhang2023,WangZhang2022} for some RL algorithms in stochastic LQ control problems. For mean-field game problems, uz Zaman et al. \cite{uzZaman2023} designed a stochastic optimization-based RL algorithm to solve a discrete-time LQ mean-field game problem. Fu et al. \cite{Fuetal2020} established an actor-critic RL algorithm for an LQ mean-field game in discrete time. Subramanian and Mahajan \cite{SubramanianMahajan2019} proposed two RL algorithms to deal with a stationary mean-field game problem. Carmona et al. \cite{CarmonaLau2019PolicyGradient} obtained a policy gradient strategy to work out a mean-field control problem and Angiuli et al. \cite{Angiuli} introduced a unified Q-learning for mean-field game and control problems. Carmona et al. \cite{CarmonaLau2023} developed a Q-learning algorithm for mean-field Markov decision processes and presented its convergence proof.  However, it is worth mentioning that the aforementioned RL literature focuses on LQ control problems or discrete-time mean-field game problems and there are few RL algorithms for continuous-time LQ mean-field games. To the authors' best knowledge, Xu et al. \cite{XuShenHuang2023} is the first attempt to solve continuous-time LQ mean-field game problems by using data-driven RL algorithms. The $\epsilon$-Nash equilibrium of their problem is closely related to two algebraic Riccati equations (AREs). They first designed a model-based PI algorithm to solve these two AREs, where all system coefficients are indispensable. Then, they implemented the model-based PI algorithm by collecting the state and input data of a given agent, and thus removed the requirement of all system parameters. 

Inspired by the above articles, this paper focuses on the same  LQ mean-field game problem as in Xu et al. \cite{XuShenHuang2023}, but proposes two novel data-driven RL methods to address this problem.  In particular, we provide a different idea to implement the model-based PI algorithm and a model-based VI algorithm, which reduces the computational complexities of our data-driven algorithms. The main contributions of this paper are summarized as follows.
\begin{itemize}
	\item We propose a system transformation idea to implement the model-based algorithms. Specifically, we transform the original stochastic system into an ODE and then carry out the model-based algorithms by the input/state data of the ODE. Different from the algorithms that directly use stochastic data \cite{LiXu2020,ZhangLi2023,Zhang2023,WangZhang2022,XuShenHuang2023}, the algorithms that adopt our idea have smaller computational complexities. In addition, this idea may also be applicable to other data-driven RL algorithms for stochastic problems,  especially for problems where the diffusion term of their system dynamics does not include control and state variables. 
	\item We develop a data-driven PI algorithm to solve the LQ mean-field game problem. By virtue of the proposed system transformation idea, a novel data-driven PI algorithm is proposed to circumvent the need of all system coefficients. The simulation results show that this algorithm successfully obtains an $\epsilon$-Nash equilibrium with errors similar to those of \cite[Section 4]{XuShenHuang2023}. But it can be noted that our algorithm may be more computationally efficient than their algorithm (see Remark \ref{remark1}). 
	\item We develop a data-driven VI algorithm to deal with the LQ mean-field game problem. It is worth mentioning that the PI algorithms require a priori knowledge of two Hurwitz matrices and these Hurwitz matrices are closely related to the system coefficients. When all system matrices are unavailable, it may be difficult to obtain two matrices that satisfy this condition. As a consequence, we develop a data-driven VI algorithm to  overcome this difficulty. The proposed data-driven VI algorithm neither requires the system parameter information nor the assumption of two initial Hurwitz matrices.
\end{itemize} 

The rest of this paper is organized as follows. In Section 2, we give some standard notations and terminologies, and formulate the continuous-time LQ mean-field game problem. In Section 3, we design the data-driven PI algorithm to work out the game problem. In Section 4, we develop the data-driven VI algorithm to solve the problem. In Section 5, we validate the obtained algorithms by means of two simulation examples. In Section 6, we give some concluding remarks and outlooks.

\section{Problem formulation and preliminaries}
\subsection{Notations}
Let us denote $\mathbb{Z}^+$ and $\mathbb{Z}$ the set of  positive integers and nonnegative integers. Let $\mathbb{R}$, $\mathbb{R}^{m\times n}$ and $\mathbb{R}^m$ be the collection of real numbers, $m\times n$-dimensional real matrices and $m$-dimensional real vectors, respectively. Denote the $m\times m$ identity matrix as $\mathbb{I}_m$. For notation simplicity, we denote any zero vector or zero matrix by $0$. Let $diag\{\nu\}$ be a diagonal matrix whose diagonal is vector $\nu$. $|\cdot|$ represents the induced matrix norm. Let $\mathbb{S}^m$, $\mathbb{S}^m_+$ and $\mathbb{S}^m_{++}$ denote the set of symmetric matrices, positive semidefinite matrices and positive definite matrices in $\mathbb{R}^{m\times m}$. A positive semidefinite (respectively, positive definite) matrix $M$ is denoted by $M\geq0$ (respectively, $M> 0$). Given $M\in\mathbb{R}^{m\times m}$, let $Re(\lambda_l(M))$, $l=1,2,\cdots,m$, denote the real part of the $l$-th eigenvalue of $M$. Let superscript $T$ be the transpose of any vector or matrix. Furthermore, $\otimes$ represents the Kronecker product. Given a matrix $M\in\mathbb{R}^{m\times n}$,  $M^\dagger$ denotes its pseudoinverse and $vec(M)\triangleq[M_1^T,M_2^T,\cdots,M_n^T]^T$, in which $M_l\in\mathbb{R}^m$, $l=1,2,\cdots,n$, denotes the $l$-th column of matrix $M$. Given a symmetric matrix $S\in\mathbb{S}^m$, we define $vecs(S)\triangleq[S_{11},2S_{12},2S_{13},\cdots,2S_{1m},S_{22},2S_{23},\cdots,2S_{m-1m},S_{mm}]^T$, where $S_{rl}$, $r,l=1,2,\cdots, m$, is the $(r,l)$-th element of matrix $S$. For a given vector $\zeta\in\mathbb{R}^m$, let $\overline{\zeta}\triangleq[\zeta_1^2,\zeta_1\zeta_2,\cdots,\zeta_1\zeta_m,\zeta_2^2,\zeta_2\zeta_3,\cdots,\zeta_{m-1}\zeta_m,\zeta_m^2]^T$, where $\zeta_l$, $l=1,2,\cdots,m$, denotes the $l$-th element of  $\zeta$. For any matrix $M\in\mathbb{R}^{m\times n}$ and vector $\varphi\in\mathbb{R}^n$, we denote by $[M]_{rl}$, $r=1,2,\cdots,m$, $l=1,2,\cdots,n$, the $(r,l)$-th element of $M$ and $[\varphi]_r$,  $r=1,2,\cdots,n$, the $r$-th element of $\phi$. 

\subsection{LQ mean-field game problems} 

We study a stochastic differential game with $N$ agents in a large-population framework. The state of agent $i$ satisfies
\begin{equation}\label{system}
	\left\{
	\begin{aligned}
		dx_i(t)&=\left[Ax_i(t)+Bu_i(t)\right]dt+CdW_i(t),\\
		x_i(0)&=x_{i0},\quad 1\leq i \leq N,
	\end{aligned}\right.
\end{equation} 
where  $x_i(\cdot)\in\mathbb{R}^n$ and $u_i(\cdot)\in\mathbb{R}^m$ represent the state and the control strategy of agent $i$, respectively.  $\left\{W_i(\cdot)\right\}_{i=1}^N$ are  independent standard $p$-dimensional Brownian motions defined on a filtered complete probability space $\left(\Omega,\mathcal{F},\left(\mathcal{F}_t\right)_{t\geq 0},\mathbb{P}\right)$, where $\mathcal F_t\triangleq\sigma\left(W_i(s): 1\leq i \leq N,\ 0\leq s\leq t\right)$. The system coefficients $A$, $B$ and $C$ are three unknown constant matrices of proper sizes.  The initial states $\left\{x_{i0}\right\}_{i=1}^N$ are independent
and identically distributed with the same expectation. It is assumed that the initial states are independent of $\left\{W_i(\cdot)\right\}_{i=1}^N$ and their second moments are finite. Moreover, let $\mathcal F^{W_i}_t\triangleq\sigma\left(W_i(s):0\leq s\leq t\right)$ be the information available to agent $i$, $1\leq i \leq N$.

The admissible control set of agent $i$ is 
\begin{equation*}
	\mathcal{U}_{ad,i}\triangleq\left\{u_i(\cdot):[0,+\infty)\times\Omega\rightarrow\mathbb{R}^m\left|u_i(\cdot)\ \text{is adapted to}\ \mathcal F^{W_i}_t,\ \mathbb{E}\int_0^{+\infty} e^{-\rho t}|u(t)|^2dt<+\infty\right.\right\},\ 1\leq i\leq N,
\end{equation*}
and all agents’ admissible control set is  $\mathcal{U}_{ad}=\mathcal{U}_{ad,1}\times\mathcal{U}_{ad,2}\times\cdots\times\mathcal{U}_{ad,N}$, where $\rho>0$ is a discount parameter.

The cost functional of agent $i$ is
\begin{equation*}
	\mathcal{J}_i(u_i(\cdot),u_{-i}(\cdot))\triangleq\mathbb{E}\int_{0}^{+\infty}e^{-\rho t} \left[\left(x_i(t)-x_N(t)\right)^TQ\left(x_i(t)-x_N(t)\right)+u_i(t)^TRu_i(t)\right]dt, \ 1\leq i\leq N,
\end{equation*}
where $Q>0$, $R>0$ are two weighting matrices,  $x_N(\cdot)\triangleq\frac{1}{N}\sum_{i=1}^Nx_i(\cdot)$ and $u_{-i}(\cdot)\triangleq\left(u_1(\cdot),\cdots,u_{i-1}(\cdot),u_{i+1}(\cdot),\cdots,u_N(\cdot)\right)$. 

Then the infinite-horizon LQ mean-field game problem is as follows.

\vspace{0.2cm}

\noindent \textbf{Problem (LQG)} Find $u^*(\cdot)=(u_1^*(\cdot),u_2^*(\cdot),\cdots,u_N^*(\cdot))\in\mathcal{U}_{ad}$ satisfying
\begin{equation*}
	\mathcal{J}_i(u^*_i(\cdot),u^*_{-i}(\cdot))=\min_{u_i(\cdot)\in\mathcal{U}_{ad,i}}\mathcal{J}_i(u_i(\cdot),u^*_{-i}(\cdot)), \ 1\leq i\leq N,
\end{equation*}
where $u^*_{-i}(\cdot)=\left(u^*_1(\cdot),\cdots,u^*_{i-1}(\cdot),u^*_{i+1}(\cdot),\cdots,u^*_N(\cdot)\right)$.

\vspace{0.2cm}

Similar to \cite{Huang 2010,XuShenHuang2023}, we are interested in designing decentralized control strategies and the $\epsilon$-Nash equilibrium of Problem (LQG), which are closely related to two AREs
\begin{equation}\label{ARE_P}
	\rho P=PA+A^TP-PBR^{-1}B^TP+Q,
\end{equation}
and
\begin{equation}\label{ARE_S}
	\rho Y=YA+A^TY-YBR^{-1}B^TY.
\end{equation}

In order to guarantee the existence of solutions to these two equations, the following assumption is required.
\begin{assum}\label{assu1}
	 $Re(\lambda_l(A))\neq \rho$, $l=1,2,\cdots,n$, and $(A, B)$ is stabilizable.
\end{assum}

By virtue of AREs (\ref{ARE_P}) and (\ref{ARE_S}), the next lemma presents the $\epsilon$-Nash equilibrium of Problem (LQG), whose proof follows from  \cite[Proposition 2.1, Remark 3.1]{XuShenHuang2023} and \cite[Theorem 11]{Huang 2010} and thus is omitted here.

\begin{lemma}\label{solution} Let Assumption \ref{assu1} hold. Then we have

(i) ARE (\ref{ARE_P}) admits a solution $P^*>0$ such that $A-BK^*$ is Hurwitz, and ARE (\ref{ARE_S}) admits a solution $Y^*\geq 0$ such that $A-BK^*_Y-0.5\rho\mathbb{I}_n$ is Hurwitz, where $K^*\triangleq R^{-1}B^TP^*$ and $K^*_Y\triangleq R^{-1}B^TY^*$; 
	
(ii) The decentralized strategies of Problem (LQG) are
\begin{equation*}
	u_i(t)=-K^*x_i(t)-(K_Y^*-K^*)\widehat{x}(t),\quad 1\leq i\leq N,
\end{equation*}
where $\widehat{x}(\cdot)$ is governed by the aggregate quantity
\begin{equation}\label{quantity}
	\begin{cases}
		d\widehat{x}(t)=\big(A-BK^*_Y\big)\widehat{x}(t)dt,\\
		\widehat{x}(0)=\xi_0.\\
	\end{cases}
\end{equation}
Moreover, $\big\{u_i(\cdot)\big\}_{i=1}^N$ construct an $\epsilon$-Nash equilibrium of Problem (LQG).
\end{lemma}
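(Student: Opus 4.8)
For part (i), the plan is to reduce the two discounted AREs to classical ones via the substitution $\widetilde{A}\triangleq A-\frac{\rho}{2}\mathbb{I}_n$, under which (\ref{ARE_P}) becomes $0=P\widetilde{A}+\widetilde{A}^TP-PBR^{-1}B^TP+Q$ and (\ref{ARE_S}) becomes $0=Y\widetilde{A}+\widetilde{A}^TY-YBR^{-1}B^TY$. I would first check that the classical hypotheses transfer: the controllable subspace of $(\widetilde{A},B)$ coincides with that of $(A,B)$ and the uncontrollable modes of $\widetilde{A}$ are those of $A$ shifted left by $\frac{\rho}{2}$, so $(A,B)$ stabilizable implies $(\widetilde{A},B)$ stabilizable, while $Q>0$ makes $(\widetilde{A},Q^{1/2})$ observable; the spectral part of Assumption \ref{assu1} is exactly what makes the Hamiltonian matrix associated with (\ref{ARE_S}) (where the state weight vanishes) hyperbolic. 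Classical Riccati theory then produces the unique $P^*>0$ with the corresponding closed-loop matrix Hurwitz, and the maximal $Y^*\ge 0$ (equal to $0$ precisely when $\widetilde{A}$ is already Hurwitz) with its closed-loop matrix Hurwitz, which is the content of (i); I would carry the $\frac{\rho}{2}$-shift along explicitly, since the shifted closed-loop matrices are what is later needed to place the feedback controls in $\mathcal{U}_{ad,i}$.

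For the derivation of the strategies in (ii), I would pass heuristically to the limit $N\to\infty$: by symmetry and the law of large numbers the coupling term $x_N(\cdot)$ is replaced by a deterministic trajectory $m(\cdot)$, so the representative agent faces the LQ tracking problem of minimizing $\mathbb{E}\int_0^{+\infty}e^{-\rho t}\big[(x-m)^TQ(x-m)+u^TRu\big]dt$ subject to $dx=(Ax+Bu)dt+CdW$. Its value function has the form $x^TP^*x-2x^T\phi(\cdot)+c(\cdot)$, the optimal feedback is $u^*=-K^*x+R^{-1}B^T\phi$, and $\phi$ solves a linear backward ODE driven by $Qm$. Imposing the consistency condition $\mathbb{E}[x^*(t)]=m(t)$ on the optimally controlled state, taking expectations to get $\dot m=(A-BK^*)m+BR^{-1}B^T\phi$, and trying the ansatz $\phi=(P^*-Y^*)m$: substituting it into the backward ODE for $\phi$ and invoking AREs (\ref{ARE_P}) and (\ref{ARE_S}) should verify the ansatz and force $\dot m=(A-BK_Y^*)m$, i.e.\ $m=\widehat{x}$ with $\widehat{x}$ as in (\ref{quantity}); feeding $\phi=(P^*-Y^*)\widehat{x}$ back into $u^*$ then yields precisely $u^*=-K^*x-(K_Y^*-K^*)\widehat{x}$ with $x=x_i$.

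For the $\epsilon$-Nash property, I would insert $\{u_i\}_{i=1}^N$ into (\ref{system}), so each $x_i$ obeys $dx_i=\big[(A-BK^*)x_i-B(K_Y^*-K^*)\widehat{x}\big]dt+CdW_i$; subtracting $d\widehat{x}=\big[(A-BK^*)\widehat{x}-B(K_Y^*-K^*)\widehat{x}\big]dt$ from the averaged equation gives $d(x_N-\widehat{x})=(A-BK^*)(x_N-\widehat{x})dt+\frac{1}{N}\sum_{i=1}^N CdW_i$, and since the drift matrix is Hurwitz (after the $\frac{\rho}{2}$-shift) and the noise coefficient is $O(1/\sqrt{N})$, a variation-of-constants plus Gr\"onwall estimate gives $\sup_{t\ge0}e^{-\rho t}\mathbb{E}|x_N(t)-\widehat{x}(t)|^2=O(1/N)$. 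Then, fixing an agent $i$ and an admissible deviation $u_i'$ — where one may assume $\mathcal{J}_i(u_i',u_{-i}^*)\le\mathcal{J}_i(u_i^*,u_{-i}^*)$, which bounds $\mathbb{E}\int_0^{+\infty}e^{-\rho t}|x_i'(t)|^2dt$ through the discounted cost and keeps all perturbation estimates uniform — I would show that the deviation perturbs the other agents' states, hence $x_N$, only by $O(1/N)$, combine this with the previous estimate to obtain $|\mathcal{J}_i(u_i',u_{-i}^*)-\mathcal{J}_i^{\mathrm{lim}}(u_i')|\le\epsilon_N$ with $\epsilon_N\to 0$ ($\mathcal{J}_i^{\mathrm{lim}}$ the limiting cost against target $\widehat{x}$), and finally use the optimality of $u_i^*$ for $\mathcal{J}_i^{\mathrm{lim}}$ to chain the inequalities into $\mathcal{J}_i(u_i^*,u_{-i}^*)\le\mathcal{J}_i(u_i',u_{-i}^*)+2\epsilon_N$, which is the asserted $\epsilon$-Nash property with $\epsilon=\epsilon(N)\to 0$.

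The hard part is this last step. The existence claims in (i) are essentially bookkeeping once the $\frac{\rho}{2}$-shift is in place, but the $\epsilon$-Nash verification needs the $O(1/\sqrt{N})$ approximation of $x_N$ by $\widehat{x}$ in the presence of the discount, a careful control of the feedback coupling when a single agent deviates, and the exclusion of pathological deviations — the last handled by restricting to finite-cost deviations, so that the discount forces the deviating state to remain $L^2$-bounded. Since all of this is exactly what \cite[Proposition 2.1, Remark 3.1]{XuShenHuang2023} and \cite[Theorem 11]{Huang 2010} establish, the paper cites them rather than reproducing the argument.
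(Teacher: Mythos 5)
The paper does not actually prove this lemma: it explicitly omits the argument and defers to \cite[Proposition 2.1, Remark 3.1]{XuShenHuang2023} and \cite[Theorem 11]{Huang 2010}. Your outline is, in substance, the standard Nash-certainty-equivalence argument contained in those references --- the $\tfrac{\rho}{2}$-shift reduction of the discounted AREs, the limiting tracking problem with affine value function, the consistency ansatz $\phi=(P^*-Y^*)m$ forcing $\dot m=(A-BK_Y^*)m$, and the $O(1/\sqrt N)$ coupling estimates for the $\epsilon$-Nash property --- so you are reconstructing essentially the same route the paper delegates to its citations, and the consistency computation in your part (ii) does reproduce exactly the strategies and the aggregate dynamics (\ref{quantity}).

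One point does not match the statement as written. Your reduction in (i) delivers Hurwitzness of the \emph{shifted} closed-loop matrix: from $\rho P^*=P^*A+A^TP^*-P^*BR^{-1}B^TP^*+Q$ one gets
\begin{equation*}
P^*\big(A-BK^*-0.5\rho\mathbb{I}_n\big)+\big(A-BK^*-0.5\rho\mathbb{I}_n\big)^TP^*=-\big(Q+P^*BR^{-1}B^TP^*\big)<0,
\end{equation*}
hence $A-BK^*-0.5\rho\mathbb{I}_n$ is Hurwitz, which is what your plan proves and what is actually used downstream (it is also what Lemma \ref{MPI} asserts for the iterates). The lemma, however, claims the unshifted matrix $A-BK^*$ is Hurwitz, and this stronger property does not follow from Assumption \ref{assu1} alone by your argument (a scalar example with $A=\rho/2$, $B=R=1$ and $Q$ small shows the unshifted closed loop can be unstable while Assumption \ref{assu1} holds). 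So either you should flag that your construction yields the $0.5\rho$-shifted stability --- which suffices for admissibility and for the $\epsilon$-Nash estimates --- or an additional argument (or an additional hypothesis, as in the cited source) is needed to obtain the claim for $A-BK^*$ literally as stated.
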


Lemma \ref{solution} illustrates that AREs (\ref{ARE_P}) and (\ref{ARE_S}) play a core role in constructing the $\epsilon$-Nash equilibrium. Noteworthily, in view of the nonlinear properties of (\ref{ARE_P}) and (\ref{ARE_S}), it is hard to directly solve the two equations.  Meanwhile, when all system coefficients are unavailable in the real world, it is more difficult to solve these two equations. In the next two sections, we will propose two algorithms to solve AREs (\ref{ARE_P}) and (\ref{ARE_S}) without knowing all system matrices. 

\section{A system transformation data-driven PI algorithm}
In this section, we devote ourselves to designing a novel data-driven PI algorithm to tackle Problem (LQG) without knowing all system coefficients.

Our algorithm is based on a model-based PI algorithm developed in \cite[Lemmas 3.1 and 3.2]{XuShenHuang2023}. We summarize the model-based PI algorithm in Algorithm \ref{model-based PI} and present its convergence results in the next lemma for future use.

\begin{lemma}\label{MPI}
	Suppose that Assumption \ref{assu1} holds. 
	Then the sequences $\{K^k\}_{k=1}^{+\infty}$ and $\{K^k_Y\}_{k=1}^{+\infty}$ obtained by Algorithm \ref{model-based PI} have the following properties:
		
		(i)  $\big\{A-0.5\rho\mathbb{I}_n-BK^k\big\}_{k=1}^{+\infty}$ and  $\big\{A-0.5\rho\mathbb{I}_n-BK^k_Y\big\}_{k=1}^{+\infty}$ are Hurwitz matrices;
		
		(ii) $\lim_{k\rightarrow+\infty}K^k=K^*$, $\lim_{k\rightarrow+\infty}K^k_Y=K^*_Y$.
\end{lemma}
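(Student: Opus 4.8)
\noindent\emph{Proof proposal.} Lemma \ref{MPI} records the convergence of the model-based policy iteration of \cite[Lemmas 3.1 and 3.2]{XuShenHuang2023}, so one may simply cite it; I sketch instead the self-contained route, which is the classical Kleinman--Newton analysis of policy iteration. Write $\tilde A\triangleq A-0.5\rho\mathbb{I}_n$, so that AREs (\ref{ARE_P}) and (\ref{ARE_S}) read $\tilde A^TP+P\tilde A-PBR^{-1}B^TP+Q=0$ and $\tilde A^TY+Y\tilde A-YBR^{-1}B^TY=0$, i.e.\ they are the standard Riccati equations associated with $(\tilde A,B,Q,R)$ and $(\tilde A,B,0,R)$; Assumption \ref{assu1} makes $(\tilde A,B)$ stabilizable and, via Lemma \ref{solution}, provides the stabilizing solutions $P^*>0$, $Y^*\geq 0$ and their uniqueness. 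One step of Algorithm \ref{model-based PI} reads: given $K^k$ with $\tilde A-BK^k$ Hurwitz, solve the Lyapunov equation
\[
(\tilde A-BK^k)^TP^k+P^k(\tilde A-BK^k)+Q+(K^k)^TRK^k=0
\]
for $P^k$ and set $K^{k+1}\triangleq R^{-1}B^TP^k$; with $Q$ replaced by $0$ the same recursion produces $Y^k$ and $K^{k+1}_Y\triangleq R^{-1}B^TY^k$. The plan is to prove (i) by induction on $k$ and then to deduce (ii) from monotonicity of the iterates together with a comparison against the Riccati solution.

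\noindent\emph{Part (i).} The base step is the standing admissibility of the initial gains (their closed loops are Hurwitz). For the induction step, suppose $\tilde A-BK^k$ is Hurwitz; then $P^k=\int_0^{\infty}e^{(\tilde A-BK^k)^Ts}\big(Q+(K^k)^TRK^k\big)e^{(\tilde A-BK^k)s}\,ds$ is the unique solution of the Lyapunov equation, with $P^k>0$ since $Q>0$; likewise $Y^k\geq 0$. Using $\tilde A-BK^{k+1}=(\tilde A-BK^k)-B(K^{k+1}-K^k)$ and $B^TP^k=RK^{k+1}$, the Lyapunov equation rearranges into the key identity
\[
(\tilde A-BK^{k+1})^TP^k+P^k(\tilde A-BK^{k+1})=-Q-(K^{k+1})^TRK^{k+1}-(K^k-K^{k+1})^TR(K^k-K^{k+1}).
\]
For the $\{K^k\}$ sequence the right-hand side is negative definite ($Q>0$), so with $P^k>0$ the Lyapunov criterion gives that $\tilde A-BK^{k+1}$ is Hurwitz. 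For the $\{K^k_Y\}$ sequence the corresponding right-hand side (with $Q=0$) is only negative semidefinite, so I argue by contradiction: if $\tilde A-BK^{k+1}_Y$ had an eigenvalue $\lambda$ with $Re(\lambda)\geq 0$ and eigenvector $v$, then testing the identity with $v$ and using $Y^k\geq 0$ forces the right-hand quadratic form to vanish at $v$, whence $K^{k+1}_Y v=0$ and $K^k_Y v=0$, so $(\tilde A-BK^k_Y)v=\tilde A v=\lambda v$; but $\tilde A-BK^k_Y$ is Hurwitz by the induction hypothesis, contradicting $Re(\lambda)\geq 0$. This closes the induction and establishes (i).

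\noindent\emph{Part (ii).} Subtracting the Lyapunov equation for $P^{k+1}$ from the key identity shows that $P^k-P^{k+1}$ solves a Lyapunov equation with Hurwitz coefficient $\tilde A-BK^{k+1}$ and nonnegative forcing $(K^k-K^{k+1})^TR(K^k-K^{k+1})$, hence $P^k\geq P^{k+1}$; rewriting ARE (\ref{ARE_P}) around $K^k$ and subtracting the Lyapunov equation for $P^k$ shows that $P^k-P^*$ solves a Lyapunov equation with Hurwitz coefficient $\tilde A-BK^k$ and nonnegative forcing $(K^k-K^*)^TR(K^k-K^*)$, hence $P^k\geq P^*$. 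Thus $\{P^k\}$ is nonincreasing and bounded below, $P^k\to P^{\infty}\geq P^*>0$, and then $K^{k+1}=R^{-1}B^TP^k\to R^{-1}B^TP^{\infty}$; letting $k\to\infty$ in the Lyapunov equation and in the update identity shows that $P^{\infty}$ solves ARE (\ref{ARE_P}) with $\tilde A-BR^{-1}B^TP^{\infty}$ Hurwitz (the Lyapunov criterion applies since the limiting right-hand side equals $-Q<0$), so $P^{\infty}=P^*$ by uniqueness of the stabilizing solution and $K^k\to K^*$. Running the same comparisons with $Q=0$ gives $Y^k\downarrow Y^{\infty}$ with $Y^k\geq Z$ for every symmetric solution $Z$ of ARE (\ref{ARE_S}); since $Y^*$ is such a solution and, being stabilizing, is the maximal one, $Y^{\infty}=Y^*$ and $K^k_Y\to K^*_Y$.

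\noindent\emph{Main obstacle.} The only step beyond routine matrix manipulation is the propagation of the Hurwitz property for the $\{K^k_Y\}$ sequence in Part (i): since ARE (\ref{ARE_S}) puts no weight on the state, the textbook Lyapunov argument (negative-definite right-hand side) is unavailable, and one must instead use the induction hypothesis to turn any would-be nonnegative mode of $\tilde A-BK^{k+1}_Y$ into a common eigenvector of the Hurwitz matrix $\tilde A-BK^k_Y$, as above; Assumption \ref{assu1} enters only indirectly, namely through Lemma \ref{solution}, which furnishes the stabilizing solutions against which the monotone sequences are compared. Everything else is bookkeeping, which is also why the statement may simply be imported from \cite{XuShenHuang2023}.
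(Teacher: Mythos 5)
Your proposal is correct, but it follows a genuinely different route from the paper: the paper offers no proof of Lemma \ref{MPI} at all, importing both the algorithm and its convergence from \cite[Lemmas 3.1 and 3.2]{XuShenHuang2023}, whereas you give a self-contained Kleinman--Newton analysis for the $\rho$-shifted matrix $\tilde A=A-0.5\rho\mathbb{I}_n$. I checked the key identity
$(\tilde A-BK^{k+1})^TP^k+P^k(\tilde A-BK^{k+1})=-Q-(K^{k+1})^TRK^{k+1}-(K^k-K^{k+1})^TR(K^k-K^{k+1})$,
the monotonicity comparisons $P^{k+1}\leq P^k$, $P^k\geq P^*$ (and their $Q=0$ analogues), and your eigenvector contradiction for propagating the Hurwitz property along $\{K^k_Y\}$; all are sound, and the last point is exactly the step that the textbook argument misses when the state weight vanishes, so it is good that you isolated it. What your route buys is independence from the cited reference and an explicit explanation of why the $Y$-iteration remains stabilizing; what the citation buys is brevity, which is why the paper states the lemma ``for future use'' only. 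Three small items you should make explicit if this proof is kept: (a) Algorithm \ref{model-based PI} initializes with $A-BK^0$ and $A-BK^0_Y$ Hurwitz, so the base case of your induction needs the (one-line) remark that $\rho>0$ shifts the spectrum left, making $\tilde A-BK^0$ and $\tilde A-BK^0_Y$ Hurwitz; (b) Lemma \ref{solution} asserts existence of $P^*$ and $Y^*$ but not the uniqueness of the stabilizing solution of (\ref{ARE_P}) nor the maximality of $Y^*$ among symmetric solutions of (\ref{ARE_S}), so the standard Sylvester/Lyapunov arguments for these two facts should be stated rather than tacitly attributed to Lemma \ref{solution}; (c) your indexing ($P^k$ from $K^k$, then $K^{k+1}=R^{-1}B^TP^k$) is shifted by one relative to Algorithm \ref{model-based PI} ($P^k$ from $K^{k-1}$), which is harmless but worth flagging to avoid confusion with the statement of the lemma.
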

\begin{algorithm}[h]
	\caption{Model-based PI algorithm}
	\label{model-based PI}
	\begin{algorithmic}[1]
		
		\State Initial $k=1$. Choose $K^0\in\mathbb{R}^{m\times n}$ and  $K^0_Y\in\mathbb{R}^{m\times n}$ such that $A-BK^0$ and $A-BK^0_Y$ are Hurwitz. Predefine a small  threshold $\varepsilon>0$. 
		
		\Loop
		
		\State Calculate $P^k$ and $Y^k$ by
		 \begin{subequations}
			\begin{equation}\label{PE1}
				\rho P^k=P^k(A-BK^{k-1})+(A-BK^{k-1})^TP^k+ (K^{k-1})^TRK^{k-1}+Q,	
			\end{equation}
			\begin{equation}\label{PE2}
				\rho Y^k=Y^k(A-BK^{k-1}_Y)+(A-BK^{k-1}_Y)^TY^k+ (K^{k-1}_Y)^TRK^{k-1}_Y.
			\end{equation}
		\end{subequations}
		
		\If{$|K^{k+1}-K^{k}|>\varepsilon$ or $|K^{k+1}_Y-K^{k}_Y|>\varepsilon$} 
		
		\State $K^k\leftarrow R^{-1}B^TP^k,\ K^k_Y\leftarrow R^{-1}B^TY^k.$
			
		 \State$k\leftarrow k+1.$
		
		\Else
		\State $\textbf{return} \,\,(P^k,\ K^k,\ Y^k,\ K^k_Y)$.
		\EndIf
		\EndLoop
	\end{algorithmic}
\end{algorithm}

It is worth pointing out that Algorithm \ref{model-based PI} needs all information of system matrices, which may be hard to obtain in the real world. In the sequel, we will remove the requirement of all system matrices. 

To this end, we first handle system (\ref{system}) of any given agent $i$. By defining $\mathcal{X}(\cdot)\triangleq\mathbb{E}[x_i(\cdot)]$, $\mathcal{V}(\cdot)\triangleq\mathbb{E}[u_i(\cdot)]$ and $\mathcal{X}_0\triangleq\mathbb{E}[x_{i0}]$, we transform system (\ref{system}) into 
\begin{equation}\label{system2}
	\begin{cases}
		d\mathcal{X}(t)=\big[A\mathcal{X}(t)+B\mathcal{V}(t)\big]dt,\\
		\mathcal{X}(0)=\mathcal{X}_0.
	\end{cases}
\end{equation}  

Next, we give a lemma to reveal a relationship between system (\ref{system2}) and the matrices to be solved in Algorithm \ref{model-based PI}, which will play a central role in deriving our data-driven PI algorithm.

\begin{lemma}\label{relationship} For any $k\in\mathbb{Z}^+$, $P^k$ generated by Algorithm \ref{model-based PI} satisfies
	\begin{equation}\label{relationship1}
		\begin{split}
			&\varphi(s_j,s_{j+1})vecs(P^k)
			-2\Big\{\int_{s_j}^{s_{j+1}}e^{-\rho t}\big[\mathcal{X}(t)^T\otimes\mathcal{X}(t)^T\big]dt\Big\}\big(\mathbb{I}_n\otimes (K^{k-1})^T\big)vec(\mathcal{L}^k)\\
			&-2\Big\{\int_{s_j}^{s_{j+1}}e^{-\rho t}\big[\mathcal{X}(t)^T\otimes\mathcal{V}(t)^T\big]dt\Big\}vec(\mathcal{L}^k)\\
			=\,\,&\Big\{\int_{s_j}^{s_{j+1}}e^{-\rho t}\big[\mathcal{X}(t)^T\otimes\mathcal{X}(t)^T\big]dt\Big\}vec\big(-(K^{k-1})^TRK^{k-1}-Q\big),\\
		\end{split}
	\end{equation}
and $Y^k$ generated by Algorithm \ref{model-based PI} satisfies
\begin{equation}\label{relationship2}
	\begin{split}
		&\varphi(s_j,s_{j+1})vecs(Y^k)
		-2\Big\{\int_{s_j}^{s_{j+1}}e^{-\rho t}\big[\mathcal{X}(t)^T\otimes\mathcal{X}(t)^T\big]dt\Big\}\big(\mathbb{I}_n\otimes (K^{k-1}_Y)^T\big)vec(\mathcal{L}^k_Y)\\
		&-2\Big\{\int_{s_j}^{s_{j+1}}e^{-\rho t}\big[\mathcal{X}(t)^T\otimes\mathcal{V}(t)^T\big]dt\Big\}vec(\mathcal{L}^k_Y)\\
		=\,\,&\Big\{\int_{s_j}^{s_{j+1}}e^{-\rho t}\big[\mathcal{X}(t)^T\otimes\mathcal{X}(t)^T\big]dt\Big\}vec\big(-(K^{k-1}_Y)^TRK^{k-1}_Y\big),\\
	\end{split}
\end{equation}
where $\varphi(s_j,s_{j+1})\triangleq e^{-\rho s_{j+1}}\overline{\mathcal{X}}(s_{j+1})^T-e^{-\rho s_j}\overline{\mathcal{X}}(s_j)^T$, $\mathcal{L}^k\triangleq  B^TP^k$, $\mathcal{L}^k_Y\triangleq B^TY^k$, $\{s_j\}_{j=0}^d$ is a set of real numbers satisfying $s_0<s_1<s_2<\cdots<s_d$ and $d\in\mathbb{Z}^+$ is a predefined positive integer.
\end{lemma}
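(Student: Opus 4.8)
The plan is to integrate an exponentially discounted quadratic Lyapunov-type functional along the deterministic trajectory (\ref{system2}) and then to convert the resulting scalar identity into the stated Kronecker-product form by routine vectorization formulas. Since (\ref{system2}) is a linear ODE, its solution $\mathcal{X}(\cdot)$ is absolutely continuous, so all the quantities below are well defined and the fundamental theorem of calculus applies; Assumption \ref{assu1} is not needed here beyond guaranteeing that Algorithm \ref{model-based PI} actually produces $P^k,Y^k$ solving (\ref{PE1})--(\ref{PE2}).

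First I would fix $k$, put $V(t)\triangleq e^{-\rho t}\mathcal{X}(t)^TP^k\mathcal{X}(t)$, and differentiate, using $\dot{\mathcal{X}}(t)=A\mathcal{X}(t)+B\mathcal{V}(t)$, to obtain
\[
\dot V(t)=e^{-\rho t}\Big[\mathcal{X}(t)^T\big(P^kA+A^TP^k-\rho P^k\big)\mathcal{X}(t)+2\mathcal{X}(t)^TP^kB\mathcal{V}(t)\Big].
\]
Next I would rewrite (\ref{PE1}) as
\[
P^kA+A^TP^k-\rho P^k=P^kBK^{k-1}+(K^{k-1})^TB^TP^k-(K^{k-1})^TRK^{k-1}-Q,
\]
so that the $\rho P^k$ contribution cancels and, using that a scalar equals its transpose, $\dot V(t)=e^{-\rho t}\big[2\mathcal{X}(t)^TP^kBK^{k-1}\mathcal{X}(t)+2\mathcal{X}(t)^TP^kB\mathcal{V}(t)-\mathcal{X}(t)^T((K^{k-1})^TRK^{k-1}+Q)\mathcal{X}(t)\big]$. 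Integrating over $[s_j,s_{j+1}]$ and applying the identity $\zeta^TS\zeta=\overline{\zeta}^Tvecs(S)$ for symmetric $S$ to recognize $V(s_{j+1})-V(s_j)=\varphi(s_j,s_{j+1})vecs(P^k)$ yields the scalar form of (\ref{relationship1}).

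It then remains to recast the three integrands as Kronecker-product expressions, for which I would use $\zeta^TM\zeta=(\zeta^T\otimes\zeta^T)vec(M)$, the rule $vec(AXB)=(B^T\otimes A)vec(X)$, and again transpose-invariance of scalars. With $\mathcal{L}^k=B^TP^k$ and $(P^kBK^{k-1})^T=(K^{k-1})^TB^TP^k$, this gives $\mathcal{X}^TP^kBK^{k-1}\mathcal{X}=(\mathcal{X}^T\otimes\mathcal{X}^T)(\mathbb{I}_n\otimes(K^{k-1})^T)vec(\mathcal{L}^k)$ and $\mathcal{X}^TP^kB\mathcal{V}=(\mathcal{X}^T\otimes\mathcal{V}^T)vec(\mathcal{L}^k)$, while the remaining term equals $(\mathcal{X}^T\otimes\mathcal{X}^T)vec\big(-(K^{k-1})^TRK^{k-1}-Q\big)$ once moved to the right-hand side; pulling the time-independent vectors out of the integrals produces (\ref{relationship1}) exactly. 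Identity (\ref{relationship2}) follows by the identical argument applied to $e^{-\rho t}\mathcal{X}(t)^TY^k\mathcal{X}(t)$ together with (\ref{PE2}), the only change being the absence of the $Q$ term. I expect the sole delicate point to be the bookkeeping of transposes and factor ordering inside the Kronecker products; there is no analytic obstacle.
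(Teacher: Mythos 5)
Your proposal is correct and follows essentially the same route as the paper's proof: differentiate the discounted quadratic form $e^{-\rho t}\mathcal{X}(t)^TP^k\mathcal{X}(t)$ along the ODE (\ref{system2}), substitute the Lyapunov-type identity obtained from (\ref{PE1}), integrate over $[s_j,s_{j+1}]$, and vectorize via the Kronecker/vecs identities, with (\ref{relationship2}) handled by the identical argument using (\ref{PE2}). The Kronecker bookkeeping you outline (including the transpose-of-a-scalar step and $vec(AXB)=(B^T\otimes A)vec(X)$) matches the paper's computation, so there is no gap.
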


\begin{proof} Since the derivations of (\ref{relationship1}) and (\ref{relationship2}) are similar, we only provide the proof of (\ref{relationship1}). According to system (\ref{system2}), we know
	\begin{equation}\label{eq8}
		\begin{split}
			&d\big(e^{-\rho t}\mathcal{X}(t)^TP^k\mathcal{X}(t)\big)\\
			=\,\,&\Big\{-\rho e^{-\rho t}\mathcal{X}(t)^TP^k\mathcal{X}(t)+e^{-\rho t}\big[A\mathcal{X}(t)+B\mathcal{V}(t)\big]^TP^k\mathcal{X}(t)+e^{-\rho t}\mathcal{X}(t)^TP^k\big[A\mathcal{X}(t)+B\mathcal{V}(t)\big]\Big\}dt\\
			=\,\,&e^{-\rho t}\Big\{\mathcal{X}(t)^T\big
			[-\rho P^k+A^TP^k+P^kA\big]\mathcal{X}(t)+2\mathcal{V}(t)^TB^TP^k\mathcal{X}(t)\Big\}dt.
		\end{split}
	\end{equation}
With the help of (\ref{PE1}), it is clear that
\begin{equation}\label{eq9}
	\begin{split}
		-\rho P^k+A^TP^k+P^kA=P^kBK^{k-1}+(K^{k-1})^TB^TP^k-(K^{k-1})^TRK^{k-1}-Q.	
	\end{split}
\end{equation}
Inserting (\ref{eq9}) into (\ref{eq8}), (\ref{eq8}) is transformed into
 	\begin{equation}\label{eq10}
 	\begin{split}
 		&d\big(e^{-\rho t}\mathcal{X}(t)^TP^k\mathcal{X}(t)\big)\\
 		=\,\,&e^{-\rho t}\Big\{\mathcal{X}(t)^T\big
 		[2(K^{k-1})^TB^TP^k-(K^{k-1})^TRK^{k-1}-Q	\big]\mathcal{X}(t)+2\mathcal{V}(t)^TB^TP^k\mathcal{X}(t)\Big\}dt.
 	\end{split}
 \end{equation}
Then, integrating the above equation from $s_j$ to $s_{j+1}$,  (\ref{eq10}) becomes
 \begin{equation*}
  	\begin{split}
  		&e^{-\rho s_{j+1}}\mathcal{X}(s_{j+1})^TP^k\mathcal{X}(s_{j+1})-e^{-\rho s_j}\mathcal{X}(s_j)^TP^k\mathcal{X}(s_j)\\
  		=\,\,&\int_{s_j}^{s_{j+1}}e^{-\rho t}\Big\{\mathcal{X}(t)^T\big
  		[2(K^{k-1})^TB^TP^k-(K^{k-1})^TRK^{k-1}-Q	\big]\mathcal{X}(t)+2\mathcal{V}(t)^TB^TP^k\mathcal{X}(t)\Big\}dt.
  	\end{split}
  \end{equation*}
 Thus, it follows from Kronecker product theory that 
 \begin{equation*}
 	\begin{split}
 		&\Big[e^{-\rho s_{j+1}}\overline{\mathcal{X}}(s_{j+1})^T-e^{-\rho s_j}\overline{\mathcal{X}}(s_j)^T\Big]vecs(P^k)\\
 		=\,\,&\Big\{\int_{s_j}^{s_{j+1}}e^{-\rho t}\big[\mathcal{X}(t)^T\otimes\mathcal{X}(t)^T\big]dt\Big\}vec\big(-(K^{k-1})^TRK^{k-1}-Q\big)\\
 		&+2\Big\{\int_{s_j}^{s_{j+1}}e^{-\rho t}\big[\mathcal{X}(t)^T\otimes\mathcal{X}(t)^T\big]dt\Big\}\big(\mathbb{I}_n\otimes (K^{k-1})^T\big)vec(B^TP^k)\\
 		&+2\Big\{\int_{s_j}^{s_{j+1}}e^{-\rho t}\big[\mathcal{X}(t)^T\otimes\mathcal{V}(t)^T\big]dt\Big\}vec(B^TP^k),\\
 	\end{split}
 \end{equation*}
which yields equation (\ref{relationship1}). This completes the proof.
\end{proof}

To use the lemma proposed above, we define 
\begin{equation}\label{symbol}
	\begin{split}
		\mathcal{I}  &\triangleq\bigg[\varphi(s_0,s_{1})^T,\varphi(s_1,s_2)^T,\cdots,\varphi(s_{d-1},s_d)^T\bigg]^T,\\	
		\mathcal{I}  _{\mathcal{X}}&\triangleq\bigg[\int_{s_{0}}^{s_{1}}e^{-\rho t}\big[\mathcal{X}(t)\otimes\mathcal{X}(t)\big]dt,\int_{s_{1}}^{s_{2}}e^{-\rho t}\big[\mathcal{X}(t)\otimes\mathcal{X}(t)\big]dt,\cdots,
		\int_{s_{d-1}}^{s_{d}}e^{-\rho t}\big[\mathcal{X}(t)\otimes\mathcal{X}(t)\big]dt\bigg]^T,\\
		\mathcal{I} _{\mathcal{X\mathcal{V}}}&\triangleq\bigg[\int_{s_{0}}^{s_{1}}e^{-\rho t}\big[\mathcal{X}(t)\otimes\mathcal{V}(t)\big]dt,\int_{s_{1}}^{s_{2}}e^{-\rho t}\big[\mathcal{X}(t)\otimes\mathcal{V}(t)\big]dt,\cdots,
		\int_{s_{d-1}}^{s_{d}}e^{-\rho t}\big[\mathcal{X}(t)\otimes\mathcal{V}(t)\big]dt\bigg]^T.\\
	\end{split}
\end{equation}
Thanks to the matrices in (\ref{symbol}), equations (\ref{relationship1}) and (\ref{relationship2}) imply that 
\begin{equation}\label{solve1}
	\Delta^k\begin{bmatrix}
		vecs(P^k)\\
		vec(\mathcal{L}^k)\\
	\end{bmatrix}=\Theta^k, \quad\forall k\in\mathbb{Z}^+,
\end{equation}
\begin{equation}\label{solve2}
	\Delta^k_Y\begin{bmatrix}
		vecs(Y^k)\\
		vec(\mathcal{L}^k_Y)\\
	\end{bmatrix}=\Theta^k_Y, \quad\forall k\in\mathbb{Z}^+,
\end{equation}
where $\Phi_k$ and $\Xi_k $ are defined by
\begin{equation*}
	\begin{split}
		\Delta^k&\triangleq\big[\mathcal{I},-2\mathcal{I}  _{\mathcal{X}}\big(\mathbb{I}_n\otimes(K^{k-1})^T\big)-2\mathcal{I} _{\mathcal{X\mathcal{V}}}\big],
		\Theta^k\triangleq\mathcal{I}  _{\mathcal{X}}vec\big(-(K^{k-1})^TRK^{k-1}-Q\big),\\
		\Delta^k_Y&\triangleq\big[\mathcal{I},-2\mathcal{I}  _{\mathcal{X}}\big(\mathbb{I}_n\otimes(K^{k-1}_Y)^T\big)-2\mathcal{I} _{\mathcal{X\mathcal{V}}}\big],
		\Theta^k_Y\triangleq\mathcal{I}  _{\mathcal{X}}vec\big(-(K^{k-1}_Y)^TRK^{k-1}_Y\big).
	\end{split}
\end{equation*}

By virtue of the above analysis, we present our data-driven PI algorithm in Algorithm \ref{data-driven PI} and provide its convergence results in Theorem \ref{convergence PI}.
\begin{algorithm}[h]
	\caption{System transformation data-driven PI algorithm}
	\label{data-driven PI}
	\begin{algorithmic}[1]
		
		\State Initial $k=1$, select a positive integer $d$ and a series of real numeber $\{s_j\}_{j=0}^d$. Choose $K^0\in\mathbb{R}^{m\times n}$ and  $K^0_Y\in\mathbb{R}^{m\times n}$ such that $A-BK^0$ and $A-BK^0_Y$ are Hurwitz. Predefine a small  threshold $\varepsilon>0$. Compute $\mathcal{I}$, $\mathcal{I}  _{\mathcal{X}}$ and $\mathcal{I} _{\mathcal{X\mathcal{V}}}$.
		
		\Loop
		
		\State Calculate ($P^k$, $\mathcal{L}^k$) and ($Y^k$, $\mathcal{L}^k_Y$), respectively, by
		\begin{equation}\label{solve3}
			\begin{bmatrix}
				vecs(P^k)\\
				vec(\mathcal{L}^k)\\
			\end{bmatrix}=(\Delta^k)^\dagger\Theta^k, 
		\end{equation}
	\begin{equation}\label{solve4}
		\begin{bmatrix}
			vecs(Y^k)\\
			vec(\mathcal{L}^k_Y)\\
		\end{bmatrix}=(\Delta^k_Y)^\dagger\Theta^k_Y.
	\end{equation}
		
		\If{$|K^{k+1}-K^{k}|>\varepsilon$ or $|K^{k+1}_Y-K^{k}_Y|>\varepsilon$} 
		
		\State  $K^{k+1}\leftarrow R^{-1}\mathcal{L}^{k}$, $K^{k+1}_Y\leftarrow R^{-1}\mathcal{L}^{k}_Y$.
		
		\State $k\leftarrow k+1$.
		
		\Else
		\State $\textbf{return} \,\,(P^k,\ K^k,\ Y^k,\ K^k_Y)$.
		\EndIf
		\EndLoop
	\end{algorithmic}
\end{algorithm}

\begin{theorem}\label{convergence PI}
	Suppose Assumption 1 holds and there exists a positive integer $\hat{d}$ such that 
	\begin{equation}\label{rank}
		rank\big([\mathcal{I}_{\mathcal{X}},\mathcal{I} _{\mathcal{X\mathcal{V}}}]\big)=mn+\frac{n(n+1)}{2}
	\end{equation}
holds for any $d\geq \hat{d}$, then $\{K^k\}_{k=1}^{+\infty}$ and $\{K^k_Y\}_{k=1}^{+\infty}$ generated by Algorithm \ref{data-driven PI} satisfy $\lim_{k\rightarrow+\infty}K^k=K^*$ and $\lim_{k\rightarrow+\infty}K^k_Y=K^*_Y$, respectively. 
\end{theorem}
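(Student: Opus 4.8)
The plan is to prove that, under hypothesis (\ref{rank}), Algorithm \ref{data-driven PI} reproduces the policy iterates of the model-based Algorithm \ref{model-based PI}; the conclusion is then immediate from Lemma \ref{MPI}(ii). Everything hinges on a single fact: for every $k\in\mathbb{Z}^+$ the data matrices $\Delta^k$ and $\Delta^k_Y$ in (\ref{solve3})--(\ref{solve4}) have full column rank $\tfrac{n(n+1)}{2}+mn$. Granting this, $(\Delta^k)^\dagger\Theta^k$ is the unique solution of the (consistent) linear system (\ref{solve1}); since Lemma \ref{relationship} exhibits $\big(vecs(P^k),vec(\mathcal{L}^k)\big)$, with $P^k$ and $\mathcal{L}^k=B^TP^k$ produced by Algorithm \ref{model-based PI}, as a solution of (\ref{solve1}), the two coincide, and likewise for $Y^k$, $\mathcal{L}^k_Y$ via (\ref{solve2}). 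An induction on $k$ (base case: the common initialisation $K^0$, $K^0_Y$) then identifies the gains generated by the two algorithms, and $\lim_{k}K^k=K^*$, $\lim_{k}K^k_Y=K^*_Y$ follow from Lemma \ref{MPI}(ii).

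To establish the rank claim I would examine the kernel of $\Delta^k$ (the matrix $\Delta^k_Y$ is handled identically). Suppose $\Delta^k\big[vecs(S)^T,\,vec(L)^T\big]^T=0$ for some $S\in\mathbb{S}^n$ and $L\in\mathbb{R}^{m\times n}$. Reading off block $j$ and expanding $\varphi(s_j,s_{j+1})vecs(S)=\int_{s_j}^{s_{j+1}}d\big(e^{-\rho t}\mathcal{X}(t)^TS\mathcal{X}(t)\big)$ along the ODE (\ref{system2}), exactly as in the proof of Lemma \ref{relationship}, the relation collapses, after collecting the quadratic and bilinear terms, to
\begin{equation*}
\int_{s_j}^{s_{j+1}}e^{-\rho t}\Big[\mathcal{X}(t)^TM\mathcal{X}(t)+2\mathcal{V}(t)^T\Lambda\mathcal{X}(t)\Big]dt=0,\qquad j=0,1,\dots,d-1,
\end{equation*}
with $M\triangleq -\rho S+A^TS+SA-(K^{k-1})^TL-L^TK^{k-1}\in\mathbb{S}^n$ and $\Lambda\triangleq B^TS-L$. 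Vectorising and stacking over $j$, this reads $[\mathcal{I}_{\mathcal{X}},\,\mathcal{I}_{\mathcal{X}\mathcal{V}}]\,\big[vec(M)^T,\,2\,vec(\Lambda)^T\big]^T=0$. By hypothesis (\ref{rank}) the kernel of $[\mathcal{I}_{\mathcal{X}},\,\mathcal{I}_{\mathcal{X}\mathcal{V}}]$ has dimension $n^2-\tfrac{n(n+1)}{2}=\tfrac{n(n-1)}{2}$; since $\mathcal{X}^TM_0\mathcal{X}\equiv 0$ for every skew-symmetric $M_0$, that kernel is exactly $\{(vec(M_0),0):M_0^T=-M_0\}$. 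Hence $\Lambda=0$ and $M$ is skew-symmetric — but $M$ is symmetric, so $M=0$; consequently $L=B^TS$ and $\rho S=(A-BK^{k-1})^TS+S(A-BK^{k-1})$.

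It remains to see $S=0$. Because $A-BK^0$ is Hurwitz and, by Lemma \ref{MPI}(i), $A-0.5\rho\mathbb{I}_n-BK^{k-1}$ is Hurwitz for $k\ge 2$, in every case every eigenvalue of $A-BK^{k-1}$ has real part strictly less than $0.5\rho$, so no two of them sum to $\rho$; the homogeneous Lyapunov equation $\rho S=(A-BK^{k-1})^TS+S(A-BK^{k-1})$ therefore has only $S=0$ as a solution, whence also $L=B^TS=0$. Thus $\ker\Delta^k=\{0\}$, i.e. $\Delta^k$ (and, analogously, $\Delta^k_Y$) has full column rank. Feeding $\mathcal{L}^k=B^TP^k$ into the policy update $R^{-1}\mathcal{L}^k=R^{-1}B^TP^k$ then returns precisely the gain produced at the corresponding step of Algorithm \ref{model-based PI}, so the induction closes and the theorem follows.

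The main obstacle is the kernel computation for $\Delta^k$: one must perform the Kronecker-product bookkeeping in the expansion of $\varphi(s_j,s_{j+1})vecs(S)$ carefully, so that the cross term $2\mathcal{V}^TB^TS\mathcal{X}$ coming from differentiating $e^{-\rho t}\mathcal{X}^TS\mathcal{X}$ is matched correctly against the $-2\mathcal{V}^TL\mathcal{X}$ contributed by the last block of $\Delta^k$, and then combine the exact dimension identity $n^2-\tfrac{n(n+1)}{2}=\tfrac{n(n-1)}{2}$ with the elementary fact that a quadratic form only sees the symmetric part of its matrix. Once $\ker\Delta^k=\{0\}$ is secured, the Lyapunov-uniqueness step is routine given Lemma \ref{MPI}(i), and the identification of the two algorithms' iterates is pure bookkeeping.
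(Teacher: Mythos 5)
Your proposal is correct and follows essentially the same route as the paper: establish full column rank of $\Delta^k$ and $\Delta^k_Y$ by showing any kernel element forces a homogeneous Lyapunov-type equation whose only solution is zero (via the Hurwitz property from Lemma \ref{MPI}(i)), then identify the unique solutions of (\ref{solve3})--(\ref{solve4}) with the model-based iterates of Algorithm \ref{model-based PI} through Lemma \ref{relationship}, and conclude by Lemma \ref{MPI}(ii). Your direct kernel-dimension count (skew-symmetric directions exhaust the $n(n-1)/2$-dimensional kernel of $[\mathcal{I}_{\mathcal{X}},\mathcal{I}_{\mathcal{X}\mathcal{V}}]$) is just a repackaging of the paper's passage from $\mathcal{I}_{\mathcal{X}}$ to $\widehat{\mathcal{I}}_{\mathcal{X}}$ using symmetry, so the two arguments coincide in substance.
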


\begin{proof}
	\textbf{Step 1:} Given $k\in\mathbb{Z}^+$, we first show that matrices $\Delta^k$ and $\Delta^k_Y$ have full column rank under condition (\ref{rank}). 
	
	We prove this result by contradiction. If the column ranks of $\Delta^k$ and $\Delta^k_Y$ are not full, then there exist two nonzero vectors $g\in\mathbb{R}^{mn+n(n+1)/2}$ and $h\in\mathbb{R}^{mn+n(n+1)/2}$ such that $\Delta^kg= 0$ and $\Delta^k_Yh=0$. Moreover, we can take four matrices $G_1\in\mathbb{S}^n$, $G_2\in\mathbb{R}^{m\times n}$, $H_1\in\mathbb{S}^n$ and $H_2\in\mathbb{R}^{m\times n}$ by $[vecs(G_1)^T,vec(G_2)^T]=g$ and $[vecs(H_1)^T,vec(H_2)^T]=h$. 
	
	Similar to (\ref{eq8}), we have
	\begin{equation}\label{eq19}
		\begin{split}
			&e^{-\rho s_{j+1}}\mathcal{X}(s_{j+1})^TG_1\mathcal{X}(s_{j+1})-e^{-\rho s_j}\mathcal{X}(s_j)^TG_1\mathcal{X}(s_j)\\
			=&\int_{s_j}^{s_{j+1}}e^{-\rho t}\Big\{\mathcal{X}(t)^T\big
			[-\rho G_1+A^TG_1+G_1A\big]\mathcal{X}(t)+2\mathcal{V}(t)^TB^TG_1\mathcal{X}(t)\Big\}dt,\\
		\end{split}
	\end{equation}
	and
	\begin{equation}\label{eq20}
		\begin{split}
			&e^{-\rho s_{j+1}}\mathcal{X}(s_{j+1})^TH_1\mathcal{X}(s_{j+1})-e^{-\rho s_j}\mathcal{X}(s_j)^TH_1\mathcal{X}(s_j)\\
			=&\int_{s_j}^{s_{j+1}}e^{-\rho t}\Big\{\mathcal{X}(t)^T\big
			[-\rho H_1+A^TH_1+H_1A\big]\mathcal{X}(t)+2\mathcal{V}(t)^TB^TH_1\mathcal{X}(t)\Big\}dt.\\
		\end{split}
	\end{equation}
	Following the derivations of (\ref{solve1}) and (\ref{solve2}) and combining (\ref{eq19}) and (\ref{eq20}), we derive 
	\begin{equation}\label{eq21}
		\begin{split}
			0&=\Delta^kg=\Delta^k\begin{bmatrix}
				vecs(G_1)\\
				vec(G_2)\\
			\end{bmatrix}=[\mathcal{I}_{\mathcal{X}},\mathcal{I} _{\mathcal{X\mathcal{V}}}]\begin{bmatrix}
				vec(\mathbb{G}_1)\\
				vec(\mathbb{G}_2)\\
			\end{bmatrix},\\
			0&=\Delta^k_Yh=\Delta^k_Y\begin{bmatrix}
				vecs(H_1)\\
				vec(H_2)\\
			\end{bmatrix}=[\mathcal{I}_{\mathcal{X}},\mathcal{I} _{\mathcal{X\mathcal{V}}}]\begin{bmatrix}
				vec(\mathbb{H}_1)\\
				vec(\mathbb{H}_2)\\
			\end{bmatrix},
		\end{split}
	\end{equation}
	where
	\begin{equation}\label{eq23}
		\begin{split}
			\mathbb{G}_1&=-\rho G_1+A^TG_1+G_1A-(K^{k-1})^TG_2-G_2^TK^{k-1},\\
			\mathbb{G}_2&=2B^TG_1-2G_2,\\
			\mathbb{H}_1&=-\rho H_1+A^TH_1+H_1A-(K^{k-1}_Y)^TH_2-H_2^TK^{k-1}_Y,\\
			\mathbb{H}_2&=2B^TH_1-2H_2.\\
		\end{split}
	\end{equation}
	
	Noting that $\mathbb{G}_1$ and $\mathbb{H}_1$ are symmetric matrices,  it follows that 
	\begin{equation}\label{eq24}
		\mathcal{I}_{\mathcal{X}}vec(\mathbb{G}_1)=\widehat{\mathcal{I}}_{\mathcal{X}}vecs(\mathbb{G}_1),\quad \mathcal{I}_{\mathcal{X}}vec(\mathbb{H}_1)=\widehat{\mathcal{I}}_{\mathcal{X}}vecs(\mathbb{H}_1),
	\end{equation}
	where 
	\begin{equation}\label{symbolI}
		\widehat{\mathcal{I}}_{\mathcal{X}}\triangleq\bigg[\int_{s_{0}}^{s_{1}}e^{-\rho t}\overline{\mathcal{X}}(t)dt,\int_{s_{1}}^{s_{2}}e^{-\rho t}\overline{\mathcal{X}}(t)dt,\cdots,
		\int_{s_{d-1}}^{s_{d}}e^{-\rho t}\overline{\mathcal{X}}(t)dt\bigg]^T.
	\end{equation}
	Keeping (\ref{eq24}) in mind, (\ref{eq21}) implies
	\begin{equation}\label{eq25}
		\begin{split}
			0&=\Delta^kg=[\mathcal{I}_{\mathcal{X}},\mathcal{I} _{\mathcal{X\mathcal{V}}}]\begin{bmatrix}
				vec(\mathbb{G}_1)\\
				vec(\mathbb{G}_2)\\
			\end{bmatrix}=[\widehat{\mathcal{I}}_{\mathcal{X}},\mathcal{I} _{\mathcal{X\mathcal{V}}}]\begin{bmatrix}
				vecs(\mathbb{G}_1)\\
				vec(\mathbb{G}_2)\\
			\end{bmatrix},\\
			0&=\Delta^k_Yh=[\mathcal{I}_{\mathcal{X}},\mathcal{I} _{\mathcal{X\mathcal{V}}}]\begin{bmatrix}
				vec(\mathbb{H}_1)\\
				vec(\mathbb{H}_2)\\
			\end{bmatrix}=[\widehat{\mathcal{I}}_{\mathcal{X}},\mathcal{I} _{\mathcal{X\mathcal{V}}}]\begin{bmatrix}
				vecs(\mathbb{H}_1)\\
				vec(\mathbb{H}_2)\\
			\end{bmatrix}.
		\end{split}
	\end{equation}
	Under condition (\ref{rank}), it is evident to see that matrix $[\widehat{\mathcal{I}}_{\mathcal{X}},\mathcal{I} _{\mathcal{X\mathcal{V}}}]$ has full column rank. Combining this fact with (\ref{eq25}), the definitions of $vec(\cdot)$ and $vecs(\cdot)$ mean that $\mathbb{G}_1=\mathbb{G}_2=\mathbb{H}_1=\mathbb{H}_2=0$. Thus, (\ref{eq23}) yields  
	\begin{equation}\label{eq27}
		\begin{split}
			(A-0.5\rho\mathbb{I}_n-BK^{k-1})^TG_1+G_1(A-0.5\rho-B^TK^{k-1})&=0,\\
			(A-0.5\rho\mathbb{I}_n-BK^{k-1}_Y)^TH_1+H_1(A-0.5\rho-B^TK^{k-1}_Y)&=0.
		\end{split}
	\end{equation}
	According to the Kronecker product theory, (\ref{eq27}) shows 
	\begin{equation}\label{eq28}
		\begin{split}
			\Big[\mathbb{I}_n\otimes(A-0.5\rho\mathbb{I}_n-BK^{k-1})^T+(A-0.5\rho-B^TK^{k-1})^T\otimes\mathbb{I}_n\Big]vec(G_1)&=0,\\
			\Big[\mathbb{I}_n\otimes(A-0.5\rho\mathbb{I}_n-BK^{k-1}_Y)^T+(A-0.5\rho-B^TK^{k-1}_Y)^T\otimes\mathbb{I}_n\Big]vec(H_1)&=0.\\
		\end{split}
	\end{equation}
	
	By virtue of (i) of Lemma \ref{MPI}, we know matrices $\mathbb{I}_n\otimes(A-0.5\rho\mathbb{I}_n-BK^{k-1})^T+(A-0.5\rho-B^TK^{k-1})^T\otimes\mathbb{I}_n$ and $\mathbb{I}_n\otimes(A-0.5\rho\mathbb{I}_n-BK^{k-1}_Y)^T+(A-0.5\rho-B^TK^{k-1}_Y)^T\otimes\mathbb{I}_n$ are Hurwitz. Then it follows from (\ref{eq28}) that $G_1=H_1=0$. Combing it with $\mathbb{G}_2=\mathbb{H}_2=0$, we obtain $G_1=H_1=G_2=H_2=0$, which means that $g=h=0$. However, this is contrary to the assumptions that $g$ and $h$ are nonzero vectors.
	
	\textbf{Step 2:} We now  prove the convergence results of Algorithm \ref{data-driven PI}. For any $k\in\mathbb{Z}^+$, since $\Delta^k$ and $\Delta^k_Y$ have full column rank, (\ref{solve3}) and (\ref{solve4}) admit a unique solution, respectively. Furthermore, Lemma \ref{relationship} shows that $(P^k, \mathcal{L}^k)$ is a solution of (\ref{solve3}) and $(Y^k, \mathcal{L}^k_Y)$ is a solution of (\ref{solve4}). Therefore, (\ref{solve3}) admits unique solution $(P^k, \mathcal{L}^k)$ and (\ref{solve4}) admits unique solution $(Y^k, \mathcal{L}^k_Y)$. Thus, the convergence of Algorithm \ref{data-driven PI} follows from Lemma \ref{MPI}. We have then proved the theorem.
\end{proof}

\begin{rem}\label{remark1}
	To further analyze our algorithm, we are now in a position to consider the computational complexity (i.e. time complexity and space complexity ) of Algorithm \ref{data-driven PI}. If each integral in (\ref{symbol}) is computed by summation with $y$ discrete points and the samples of employing Monte Carlo is $M$, then the time complexity of calculating matrices $\mathcal{I}$, $\mathcal{I}_{\mathcal{X}}$ and $\mathcal{I} _{\mathcal{X\mathcal{V}}}$ is $O(M+dy)$. In this case, the time complexity of computing the similar matrices in Xu et al. \cite{XuShenHuang2023} is $O(Mdy)$. Moreover, it is easy to verify that the space complexity of Algorithm \ref{data-driven PI} is the same as the algorithm in Xu et al. \cite{XuShenHuang2023}. Due to the fact that $M$ is always large in practice, it is evident that the computational complexity of our algorithm is much smaller than that of the algorithm in Xu et al. \cite{XuShenHuang2023}. 
\end{rem}

\begin{rem}
	In practical implementations, rank condition (\ref{rank}) can be met by adding an exploration signal to the input. There are many types of exploration signals commonly used in RL algorithms, such as Gaussian signals \cite{Bradtke1993,Zhang2023,ZhangLi2023}, random signals \cite{Tamimi2007,XuLewis2012}  and signals generated by the sum of sinusoidal functions \cite{JiangJiangPI2012,XuShenHuang2023}. In the simulation sections of this paper, we adopt exploration signals mainly generated by the combination of sinusoidal functions.  
\end{rem}

	By virtue of a system transformation idea, we have developed a data-driven PI algorithm to solve Problem (LQG) without needing all system coefficients. However, it is evident that this algorithm requires two matrices $K^0\in\mathbb{R}^{m\times n}$ and  $K^0_Y\in\mathbb{R}^{m\times n}$ so that $A-BK^0$ and $A-BK^0_Y$ are Hurwitz. When all system matrices are unknown, it may be difficult to obtain two matrices that meet this condition. Thus, we will develop a data-driven VI algorithm in the next section to solve this conundrum.

\section{A system transformation data-driven VI algorithm}
In this section, we aim to propose a data-driven VI algorithm to remove the assumption of needing two Hurwitz matrices. Moreover, the data-driven VI algorithm also does not require the knowledge of all system coefficients. 

In addition to Assumption \ref{assu1}, this section also needs the following assumption. 
\begin{assum}\label{assu2}
	$A-0.5\rho\mathbb{I}_n$ is Hurwitz.
\end{assum}

Under Assumptions \ref{assu1}-\ref{assu2}, it is easy to verify that $Y^*=0$ and $K^*_Y=0$. Thus, we only need to deal with ARE (\ref{ARE_P}) and the corresponding $K^*$. To proceed, we define a constant sequence $\{\gamma_k\}_{k=1}^{+\infty}$  and a set of bounded collections $\{\mathcal{D}_q\}_{q=0}^{+\infty}$, which satisfy
\begin{equation}\label{se1}
 \sum_{k=0}^{+\infty}\gamma_k=+\infty,\
	\sum_{k=0}^{+\infty}\gamma_k^2<+\infty,\
		\gamma_k>0, \ \forall k\in\mathbb{Z},
\end{equation}
and 
\begin{equation}\label{se2}
 \lim\limits_{q\rightarrow+\infty}\mathcal{D}_q=\mathbb{S}^n_+,\
 	\mathcal{D}_q\subseteq \mathcal{D}_{q+1},\ \forall q\in\mathbb{Z}.
\end{equation}

Based on the above symbols, we propose a model-based VI algorithm in Algorithm \ref{model-based VI}, which can initiate from any positive definite matrix. Since the model-based VI algorithm is a direct extension of \cite[Lemma 3.4, Theorem 3.3]{BianJiangVI2016}, we present its convergence results in the next lemma but omit its proof due to space limitations

\begin{lemma}\label{MVI}
	Let Assumptions \ref{assu1}-\ref{assu2} hold. Then  $\{\mathcal{K}^k\}_{k=0}^{+\infty}$ generated by Algorithm \ref{model-based VI} satisfy $\lim_{k\rightarrow+\infty}\mathcal{K}^k=K^*$.
\end{lemma}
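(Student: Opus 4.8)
The plan is to reduce the claim to the established value-iteration theory for standard (undiscounted) continuous-time LQR problems and then transfer it back through the shift that turns the discounted ARE into an undiscounted one. First I would set $\bar A\triangleq A-0.5\rho\mathbb{I}_n$, which is Hurwitz by Assumption \ref{assu2}, and rewrite ARE (\ref{ARE_P}) equivalently as $0=P\bar A+\bar A^TP-PBR^{-1}B^TP+Q$, i.e.\ the algebraic Riccati equation of the pair $(\bar A,B)$ with weights $(Q,R)$. Since $Q>0$ and $R>0$ and $\bar A$ is already Hurwitz, $(\bar A,B)$ is stabilizable and $(\bar A,Q)$ is detectable (indeed observable), so classical Riccati theory furnishes a unique solution $P^*>0$ in $\mathbb{S}^n_+$ with $\bar A-BK^*$ Hurwitz; this is exactly the $P^*$, $K^*$ of Lemma \ref{solution}, the $Y^*=0$, $K^*_Y=0$ part of that lemma being forced by Assumption \ref{assu2}.

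Next I would identify the recursion of Algorithm \ref{model-based VI} as the projected value-iteration scheme of \cite{BianJiangVI2016} run on the shifted data $(\bar A,B,Q,R)$: the iterate $P^k$ is moved along the Riccati direction $\mathcal{H}(P)\triangleq P\bar A+\bar A^TP-PBR^{-1}B^TP+Q$ with vanishing step sizes $\gamma_k$ obeying (\ref{se1}), followed by a projection onto the expanding bounded sets $\mathcal{D}_q$ of (\ref{se2}), and $\mathcal{K}^k=R^{-1}B^TP^k$. Granting this identification, the statement is a direct instance of \cite[Lemma 3.4, Theorem 3.3]{BianJiangVI2016}: one checks their standing hypotheses — the Robbins--Monro-type step-size conditions (\ref{se1}), the nested exhaustion (\ref{se2}) of $\mathbb{S}^n_+$, and stabilizability/detectability of $(\bar A,B)$, all of which hold here — to conclude $\lim_{k\to\infty}P^k=P^*$, and then $\lim_{k\to\infty}\mathcal{K}^k=R^{-1}B^TP^*=K^*$ follows from continuity of $P\mapsto R^{-1}B^TP$.

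The technical heart — and the step I expect to be the main obstacle — is the boundedness of $\{P^k\}$ together with the eventual inactivity of the projections, after which the iterates must be shown to track the flow of the differential Riccati equation $\dot P(t)=\mathcal{H}(P(t))$. This is the ``ODE method'' behind \cite{BianJiangVI2016}: from any $P(0)\in\mathbb{S}^n_+$ this flow is globally defined and converges to $P^*$ — locally because its linearization at $P^*$ is the Lyapunov operator $\delta P\mapsto\delta P(\bar A-BK^*)+(\bar A-BK^*)^T\delta P$, which is stable since $\bar A-BK^*$ is Hurwitz, and globally by the monotonicity/comparison properties of $\mathcal{H}$ available because $\bar A$ is Hurwitz and $Q>0$ — and the step-size conditions (\ref{se1}) make the discrete iterates inherit this convergence via a Lyapunov function comparing $P^k$ to $P^*$. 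Since these are precisely the ingredients supplied by \cite[Lemma 3.4]{BianJiangVI2016}, the proof can legitimately be completed by citation once the reduction to the shifted, undiscounted problem has been carried out.
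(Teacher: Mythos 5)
Your proposal is correct and takes essentially the same route as the paper, which gives no independent argument for Lemma \ref{MVI}: it simply notes that Algorithm \ref{model-based VI} is a direct extension of \cite[Lemma 3.4, Theorem 3.3]{BianJiangVI2016} and omits the proof, and your explicit shift $\bar A=A-0.5\rho\mathbb{I}_n$ together with the check of stabilizability/observability for $(\bar A,B,Q)$ is precisely the reduction that justifies that citation. One small remark: the identification with the Riccati flow requires reading the update direction as $\mathcal{P}^k\bar A+\bar A^T\mathcal{P}^k-(\mathcal{K}^k)^TR\mathcal{K}^k+Q$, equivalently $\mathcal{P}^k(A-B\mathcal{K}^k)+(A-B\mathcal{K}^k)^T\mathcal{P}^k-\rho\mathcal{P}^k+(\mathcal{K}^k)^TR\mathcal{K}^k+Q$, so the term $+(\mathcal{K}^k)^TR\mathcal{K}^k$ paired with the unmodified $A$ in Algorithm \ref{model-based VI} is evidently a sign slip in the paper that your argument silently corrects.
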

\begin{algorithm}[h]
	\caption{Model-based VI algorithm}
	\label{model-based VI}
	\begin{algorithmic}[1]
		
		\State Initial $k=0$ and $q=0$. Choose $\mathcal{P}^0>0$.  Choose a  sequence $\{\gamma_k\}_{k=1}^{+\infty}$ that meets (\ref{se1})  and a set of bounded collections $\{\mathcal{D}_q\}_{q=0}^{+\infty}$ that meets (\ref{se2}). Predefine a small  threshold $\varepsilon>0$.
		
		\Loop
		
		\State	$\mathcal{K}^k\leftarrow R^{-1}B^T\mathcal{P}^k$,\ 	$\widetilde{\mathcal{P}}\leftarrow \mathcal{P}^k+\gamma_k\big(\mathcal{P}^kA+A^T\mathcal{P}^k-\rho \mathcal{P}^k+(\mathcal{K}^k)^TR\mathcal{K}^k+Q\big).$

		\If{$|\widetilde{\mathcal{P}}-P^{k}|/\gamma_k<\varepsilon$} 
		
		\State $\textbf{return} \,\,(\mathcal{P}^k,\ \mathcal{K}^k)$.
		
		\ElsIf{$\widetilde{\mathcal{P}}\notin \mathcal{D}_q$}
		
		\State $\mathcal{P}^{k+1}\leftarrow \mathcal{P}^0, \ q\leftarrow q+1.$
		
		\Else
		\State $\mathcal{P}^{k+1}\leftarrow \widetilde{\mathcal{P}}.$
		\EndIf
		\State$k\leftarrow k+1.$
		\EndLoop
	\end{algorithmic}
\end{algorithm}

Although Algorithm \ref{model-based VI} does not need the assumption of two Hurwitz matrices, it still requires the information of system coefficients $A$ and $B$. In the sequel, we will develop a data-driven VI algorithm to solve Problem (LQG) without depending on all system matrices. To this end, we give a lemma to construct a relationship between system (\ref{system2}) and the matrices in Algorithm \ref{model-based VI}. 

\begin{lemma}\label{relationshipVI} For any $k\in\mathbb{Z}$, $\mathcal{P}^k$ generated by Algorithm \ref{model-based VI} satisfies
	\begin{equation}\label{relationshipVI1}
		\begin{split}
			&\Big[e^{-\rho s_{j+1}}\overline{\mathcal{X}}(s_{j+1})^T-e^{-\rho s_j}\overline{\mathcal{X}}(s_j)^T\Big]vecs(\mathcal{P}^k)\\
			=\,\,&\Big\{\int_{s_j}^{s_{j+1}}e^{-\rho t}\overline{\mathcal{X}}(t)^Tdt\Big\}vecs\big(\mathcal{M}^k\big)
			+2\Big\{\int_{s_j}^{s_{j+1}}e^{-\rho t}\big[\mathcal{X}(t)^T\otimes\mathcal{V}(t)^T\big]dt\Big\}vec(\mathcal{N}^k),\\
		\end{split}
	\end{equation}
	where $\mathcal{M}^k\triangleq-\rho \mathcal{P}^k+A^T\mathcal{P}^k+\mathcal{P}^kA$, $\mathcal{N}^k\triangleq  B^T\mathcal{P}^k$, $\{s_j\}_{j=0}^d$ is a set of real numbers satisfying $s_0<s_1<s_2<\cdots<s_d$ and $d\in\mathbb{Z}^+$ is a predefined positive integer.
\end{lemma}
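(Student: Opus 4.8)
The plan is to imitate the proof of Lemma~\ref{relationship}, but without ever substituting a policy‑evaluation (Riccati‑type) identity, since the VI scheme has none; the required identity will instead come straight from integrating an exact differential. Throughout, the only structural property of $\mathcal{P}^k$ that matters is that it is symmetric (which it is, being generated by Algorithm~\ref{model-based VI}, regardless of whether a reset to $\mathcal{P}^0$ occurred), so that the half‑vectorization representations are legitimate.

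First, along trajectories of the deterministic system (\ref{system2}) I would differentiate $t\mapsto e^{-\rho t}\mathcal{X}(t)^T\mathcal{P}^k\mathcal{X}(t)$, exactly as in (\ref{eq8}); using $d\mathcal{X}(t)=[A\mathcal{X}(t)+B\mathcal{V}(t)]dt$ this gives
\[
d\big(e^{-\rho t}\mathcal{X}(t)^T\mathcal{P}^k\mathcal{X}(t)\big)
= e^{-\rho t}\Big\{\mathcal{X}(t)^T\big(-\rho \mathcal{P}^k+A^T\mathcal{P}^k+\mathcal{P}^kA\big)\mathcal{X}(t)+2\mathcal{V}(t)^TB^T\mathcal{P}^k\mathcal{X}(t)\Big\}dt,
\]
which, by the definitions $\mathcal{M}^k=-\rho \mathcal{P}^k+A^T\mathcal{P}^k+\mathcal{P}^kA$ and $\mathcal{N}^k=B^T\mathcal{P}^k$, is $e^{-\rho t}\{\mathcal{X}(t)^T\mathcal{M}^k\mathcal{X}(t)+2\mathcal{V}(t)^T\mathcal{N}^k\mathcal{X}(t)\}dt$. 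Integrating over $[s_j,s_{j+1}]$ yields
\[
e^{-\rho s_{j+1}}\mathcal{X}(s_{j+1})^T\mathcal{P}^k\mathcal{X}(s_{j+1})-e^{-\rho s_j}\mathcal{X}(s_j)^T\mathcal{P}^k\mathcal{X}(s_j)
=\int_{s_j}^{s_{j+1}}e^{-\rho t}\Big\{\mathcal{X}(t)^T\mathcal{M}^k\mathcal{X}(t)+2\mathcal{V}(t)^T\mathcal{N}^k\mathcal{X}(t)\Big\}dt.
\]

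Second, I would rewrite each quadratic/bilinear form via vectorization: since $\mathcal{P}^k$ is symmetric, $\mathcal{X}^T\mathcal{P}^k\mathcal{X}=\overline{\mathcal{X}}^Tvecs(\mathcal{P}^k)$; since $\mathcal{M}^k$ is symmetric (a sum of the symmetric matrices $-\rho\mathcal{P}^k$ and $A^T\mathcal{P}^k+\mathcal{P}^kA$), $\mathcal{X}^T\mathcal{M}^k\mathcal{X}=\overline{\mathcal{X}}^Tvecs(\mathcal{M}^k)$; and $\mathcal{V}^T\mathcal{N}^k\mathcal{X}=(\mathcal{X}^T\otimes\mathcal{V}^T)vec(\mathcal{N}^k)$. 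Substituting these and pulling the constant vectors $vecs(\mathcal{P}^k)$, $vecs(\mathcal{M}^k)$, $vec(\mathcal{N}^k)$ outside the time integrals gives precisely (\ref{relationshipVI1}).

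The computation is routine and I do not expect a genuine obstacle; the only points requiring care are (i) confirming symmetry of $\mathcal{M}^k$ so that the reduced $vecs(\cdot)$ form on the right‑hand side is valid, and (ii) matching the half‑vectorization convention in the definitions of $vecs(\cdot)$ and $\overline{(\cdot)}$ with the factors of $2$ appearing there, so that no spurious constants are introduced. Unlike (\ref{relationship1})--(\ref{relationship2}), there is deliberately no step eliminating $-\rho\mathcal{P}^k+A^T\mathcal{P}^k+\mathcal{P}^kA$ in terms of the feedback gains, because the VI update only needs $\mathcal{M}^k$ and $\mathcal{N}^k$ as such; this is what makes the present derivation shorter than that of Lemma~\ref{relationship}.
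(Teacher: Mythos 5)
Your proposal is correct and follows essentially the same route as the paper's proof: differentiate $e^{-\rho t}\mathcal{X}(t)^T\mathcal{P}^k\mathcal{X}(t)$ along (\ref{system2}), integrate over $[s_j,s_{j+1}]$, and pass to the $vecs(\cdot)$/$vec(\cdot)$ (Kronecker-product) representation, with no Riccati-type substitution needed since $\mathcal{M}^k$ and $\mathcal{N}^k$ are kept as unknowns. Your added remarks on the symmetry of $\mathcal{M}^k$ and the half-vectorization conventions are accurate points of care that the paper leaves implicit.
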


\begin{proof} By virtue of system (\ref{system2}), we derive
	\begin{equation*}
		\begin{split}
			&d\big(e^{-\rho t}\mathcal{X}(t)^T\mathcal{P}^k\mathcal{X}(t)\big)\\
			=\,\,&\Big\{-\rho e^{-\rho t}\mathcal{X}(t)^T\mathcal{P}^k\mathcal{X}(t)+e^{-\rho t}\big[A\mathcal{X}(t)+B\mathcal{V}(t)\big]^T\mathcal{P}^k\mathcal{X}(t)+e^{-\rho t}\mathcal{X}(t)^T\mathcal{P}^k\big[A\mathcal{X}(t)+B\mathcal{V}(t)\big]\Big\}dt\\
			=\,\,&e^{-\rho t}\Big\{\mathcal{X}(t)^T\big
			[-\rho \mathcal{P}^k+A^T\mathcal{P}^k+\mathcal{P}^kA\big]\mathcal{X}(t)+2\mathcal{V}(t)^TB^T\mathcal{P}^k\mathcal{X}(t)\Big\}dt.
		\end{split}
	\end{equation*}
	Then, integrating the above equation from $s_j$ to $s_{j+1}$,  we have
	\begin{equation*}
		\begin{split}
			&e^{-\rho s_{j+1}}\mathcal{X}(s_{j+1})^T\mathcal{P}^k\mathcal{X}(s_{j+1})-e^{-\rho s_j}\mathcal{X}(s_j)^T\mathcal{P}^k\mathcal{X}(s_j)\\
			=\,\,&\int_{s_j}^{s_{j+1}}e^{-\rho t}\Big\{\mathcal{X}(t)^T\big
			[-\rho \mathcal{P}^k+A^T\mathcal{P}^k+\mathcal{P}^kA\big]\mathcal{X}(t)+2\mathcal{V}(t)^TB^T\mathcal{P}^k\mathcal{X}(t)\Big\}dt.
		\end{split}
	\end{equation*}
According to Kronecker product theory,  we get (\ref{relationshipVI1}). This completes the proof.
\end{proof}

In light of the matrices in (\ref{symbol}) and (\ref{symbolI}), equation  (\ref{relationshipVI1}) means that
\begin{equation*}\label{solveVI1}
	\big[\widehat{\mathcal{I}}_{\mathcal{X}},2\mathcal{I}  _{\mathcal{X\mathcal{V}}}\big]\begin{bmatrix}
		vecs(\mathcal{M}^k)\\
		vec(\mathcal{N}^k)\\
	\end{bmatrix}=\mathcal{I}vecs(\mathcal{P}^k), \quad\forall k\in\mathbb{Z}.
\end{equation*}

Now we can summarize our data-driven VI algorithm in Algorithm \ref{data-driven VI}. It should be noted that this algorithm does not require  the assumption of Hurwitz matrices and can be implemented in the setting of unknown system matrices.

\begin{algorithm}[h]
	\caption{System transformation data-driven VI algorithm}
	\label{data-driven VI}
	\begin{algorithmic}[1]
		
		\State Initial $k=0$ and $q=0$. Choose $\mathcal{P}^0>0$.  Choose a  sequence $\{\gamma_k\}_{k=1}^{+\infty}$ that meets (\ref{se1})  and a set of bounded collections $\{\mathcal{D}_q\}_{q=0}^{+\infty}$ that meets (\ref{se2}). Predefine a small  threshold $\varepsilon>0$. Compute $\mathcal{I}$, $\widehat{\mathcal{I}}_{\mathcal{X}}$ and $\mathcal{I} _{\mathcal{X\mathcal{V}}}$.
		
		\Loop
		\State Calculate ($\mathcal{M}^k$, $\mathcal{N}^k$) by
		\begin{equation}\label{solveVI11}
		\begin{bmatrix}
				vecs(\mathcal{M}^k)\\
				vec(\mathcal{N}^k)\\
			\end{bmatrix}=	\big[\widehat{\mathcal{I}}_{\mathcal{X}},2\mathcal{I}  _{\mathcal{X\mathcal{V}}}\big]^\dagger\mathcal{I}vecs(\mathcal{P}^k). 
		\end{equation}

		\State	$\mathcal{K}^k\leftarrow R^{-1}\mathcal{N}^k$, 	$\widetilde{\mathcal{P}}\leftarrow \mathcal{P}^k+\gamma_k\Big(\mathcal{M}^k+(\mathcal{K}^k)^TR\mathcal{K}^k+Q\Big).$

		\If{$|\widetilde{\mathcal{P}}-P^{k}|/\gamma_k<\varepsilon$} 
		
		\State $\textbf{return} \,\,(\mathcal{P}^k, \mathcal{K}^k)$.
		
		\ElsIf{$\widetilde{\mathcal{P}}\notin \mathcal{D}_q$}
		
		\State $\mathcal{P}^{k+1}\leftarrow \mathcal{P}^0,\  q\leftarrow q+1.$
		
		\Else
		\State $\mathcal{P}^{k+1}\leftarrow \widetilde{\mathcal{P}}$.
		\EndIf
		\State$k\leftarrow k+1.$
		\EndLoop
	\end{algorithmic}
\end{algorithm}

\begin{theorem}
	Suppose Assumptions \ref{assu1}-\ref{assu2} hold and there exists a positive integer $\hat{d}$ such that 
	\begin{equation}\label{rank22}
		rank\big([\widehat{\mathcal{I}}_{\mathcal{X}},\mathcal{I} _{\mathcal{X\mathcal{V}}}]\big)=mn+\frac{n(n+1)}{2}
	\end{equation}
	holds for any $d\geq \hat{d}$, then $\{\mathcal{K}^k\}_{k=0}^{+\infty}$ obtained by Algorithm \ref{data-driven VI} satisfy $\lim_{k\rightarrow+\infty}\mathcal{K}^k=K^*$.
\end{theorem}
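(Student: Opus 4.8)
The plan is to mirror the two-step structure of the proof of Theorem \ref{convergence PI}, but the argument is considerably shorter here because the relevant design matrix does not depend on the iteration index $k$. First I would observe that the matrix $\big[\widehat{\mathcal{I}}_{\mathcal{X}},2\mathcal{I}_{\mathcal{X}\mathcal{V}}\big]$ has full column rank: scaling the last $mn$ columns of $\big[\widehat{\mathcal{I}}_{\mathcal{X}},\mathcal{I}_{\mathcal{X}\mathcal{V}}\big]$ by the nonzero factor $2$ leaves the rank unchanged, so the rank hypothesis (\ref{rank22}) gives $rank\big([\widehat{\mathcal{I}}_{\mathcal{X}},2\mathcal{I}_{\mathcal{X}\mathcal{V}}]\big)=mn+n(n+1)/2$, which is exactly the number of columns. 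Unlike in the PI case, no contradiction argument and no Hurwitz-matrix input (from Lemma \ref{MPI}) is needed here, because the rank condition is already phrased in terms of $\widehat{\mathcal{I}}_{\mathcal{X}}$ rather than $\mathcal{I}_{\mathcal{X}}$.

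Next I would use this to identify the output of (\ref{solveVI11}) with the model-based quantities. By Lemma \ref{relationshipVI}, the pair with $\mathcal{M}^k=-\rho\mathcal{P}^k+A^T\mathcal{P}^k+\mathcal{P}^kA$ (which is symmetric since $\mathcal{P}^k\in\mathbb{S}^n$) and $\mathcal{N}^k=B^T\mathcal{P}^k$ already solves the linear system $\big[\widehat{\mathcal{I}}_{\mathcal{X}},2\mathcal{I}_{\mathcal{X}\mathcal{V}}\big]\,[vecs(\mathcal{M}^k)^T,vec(\mathcal{N}^k)^T]^T=\mathcal{I}vecs(\mathcal{P}^k)$; hence this system is consistent, and since the coefficient matrix has full column rank its unique solution is exactly what the pseudoinverse in (\ref{solveVI11}) returns. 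Consequently, at each iteration Algorithm \ref{data-driven VI} computes exactly $\mathcal{K}^k=R^{-1}\mathcal{N}^k=R^{-1}B^T\mathcal{P}^k$ and $\widetilde{\mathcal{P}}=\mathcal{P}^k+\gamma_k\big(\mathcal{M}^k+(\mathcal{K}^k)^TR\mathcal{K}^k+Q\big)=\mathcal{P}^k+\gamma_k\big(\mathcal{P}^kA+A^T\mathcal{P}^k-\rho\mathcal{P}^k+(\mathcal{K}^k)^TR\mathcal{K}^k+Q\big)$, which are precisely the updates of Algorithm \ref{model-based VI}. Since the initialization $\mathcal{P}^0$, the gain sequence $\{\gamma_k\}$ and the collections $\{\mathcal{D}_q\}$ are chosen identically and the reset rule $\widetilde{\mathcal{P}}\notin\mathcal{D}_q$ depends only on these quantities, Algorithms \ref{model-based VI} and \ref{data-driven VI} generate the same sequences $\{\mathcal{P}^k\}_{k=0}^{+\infty}$ and $\{\mathcal{K}^k\}_{k=0}^{+\infty}$, and Lemma \ref{MVI} then yields $\lim_{k\to+\infty}\mathcal{K}^k=K^*$.

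I do not expect a genuine obstacle: the substantive work has been pushed into Lemma \ref{relationshipVI} (the Kronecker-product bookkeeping connecting the ODE data to $\mathcal{M}^k$, $\mathcal{N}^k$) and Lemma \ref{MVI} (convergence of the model-based VI recursion with restarts, inherited from \cite{BianJiangVI2016}). The only point deserving a word of care is the consistency of the least-squares problem solved in (\ref{solveVI11}); this is automatic, because Lemma \ref{relationshipVI} exhibits an explicit solution, so the full-column-rank property guarantees the pseudoinverse reproduces that exact solution rather than a least-squares approximation, and the equivalence with the model-based algorithm is then exact.
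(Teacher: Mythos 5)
Your proposal is correct and follows essentially the same route as the paper's proof: use Lemma \ref{relationshipVI} to exhibit $(\mathcal{M}^k,\mathcal{N}^k)$ as a solution of (\ref{solveVI11}), invoke the rank condition (\ref{rank22}) for uniqueness, conclude that the data-driven and model-based VI iterates coincide, and finish with Lemma \ref{MVI}. Your added remarks (that multiplying the $\mathcal{I}_{\mathcal{X}\mathcal{V}}$ block by $2$ preserves the rank, that consistency makes the pseudoinverse return the exact solution, and that the reset rule with $\mathcal{D}_q$ is also preserved) merely spell out details the paper leaves implicit.
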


\begin{proof}
	Given $k\in\mathbb{Z}$, it follows from Lemma \ref{relationshipVI} that $(\mathcal{M}^k, \mathcal{N}^k)$ is a solution of (\ref{solveVI11}). When rank condition (\ref{rank22}) is guaranteed, we know (\ref{solveVI11}) admits a unique solution. Therefore, the solution matrices of Algorithm \ref{data-driven VI} are the same as those of Algorithm \ref{model-based VI}. Thus, Lemma \ref{MVI} implies the convergence of Algorithm \ref{data-driven VI}. This completes the proof.
\end{proof}

\section{Simulations}

This section demonstrates the applicability of the proposed data-driven algorithms through two numerical examples. 
\subsection{The first numerical example}\label{section5.1}
In this example, we let the coefficients  of system (\ref{system}) be the same as in \cite[Section 4]{XuShenHuang2023}, i.e.,
\begin{equation*}
	A=\begin{bmatrix}
		5 & 3\\
		10&12\\
	\end{bmatrix},
B=\begin{bmatrix}
	0\\
	1\\
\end{bmatrix},
	C=\begin{bmatrix}
	0.1 & 0.1\\
	0.1&0.1\\
\end{bmatrix}.
\end{equation*}

 The parameters in the performance index are $Q=10\mathbb{I}_n$, $R=1$ and $\rho=0.01$. In order to know the true values of $K^*$ and $K^*_Y$, we first carry out the model-based algorithm in Lemma \ref{MPI} with a large number of iterations and set its solution as the true values. The true solutions are 
\begin{equation*}
	\begin{split}
		P^*&=\begin{bmatrix}
			232.2887&   59.3007\\
			59.3007 &  34.5712\\
		\end{bmatrix},
		K^*=\begin{bmatrix}
			59.3007 &  34.5712
		\end{bmatrix},\\
	Y^*&=\begin{bmatrix}
		207.1460 &  56.5767\\
		56.5767 &  33.9800\\
	\end{bmatrix},
		K^*_Y=\begin{bmatrix}
			56.5767 &  33.9800
		\end{bmatrix}.\\
	\end{split}
\end{equation*}
 
Now we devote ourselves to solving the above problem by Algorithm \ref{data-driven PI}, which does not rely on the information of matrices $A$, $B$ and $C$. The parameters of Algorithm \ref{data-driven PI} are set as $d=20$, $s_0=0$, $s_{j+1}=s_j+0.1$, $j=0,1,2,\cdots, d-1$, $K^0=K_Y^0=[35,25]$ and $\varepsilon=10^{-3}$. The initial state is set as $[1,1]^T$. During the simulation, we choose any agent $i$ and randomly collect $10^6$ samples of its state and control policy data.  We set $u_i(t)=-K^0x_i(t)+0.3\sum_{r=1}^{100}\sin(\beta_r t)$, where $\{\beta_r\}_{r=1}^{100}$ is a set of real numbers randomly selected in $[-1000, 1000]$. Then the collected information is used to construct system (\ref{system2}) by the Monte Carlo method. The  input and state trajectories of system (\ref{system2}) used in the simulation are depicted in Figs. 1 and 2, respectively.  

\begin{figure}[ht]
	\begin{minipage}[t]{0.5\linewidth}
		\centering
		\includegraphics[width=\textwidth]{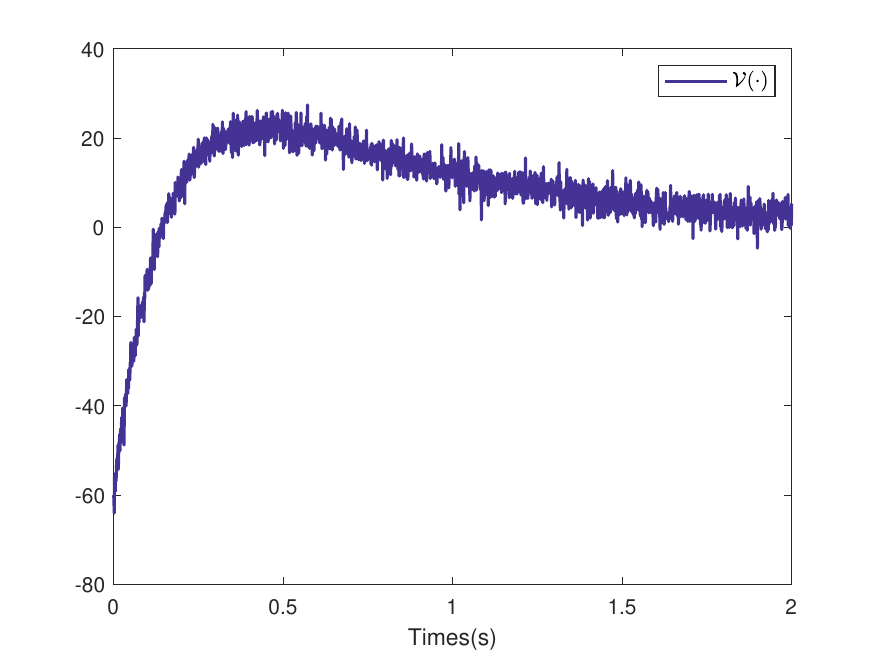}
		\caption{Input trajectory of system (7) used in Section \ref{section5.1}.}
	\end{minipage}%
	\hfill
	\begin{minipage}[t]{0.5\linewidth}
		\centering
		\includegraphics[width=\textwidth]{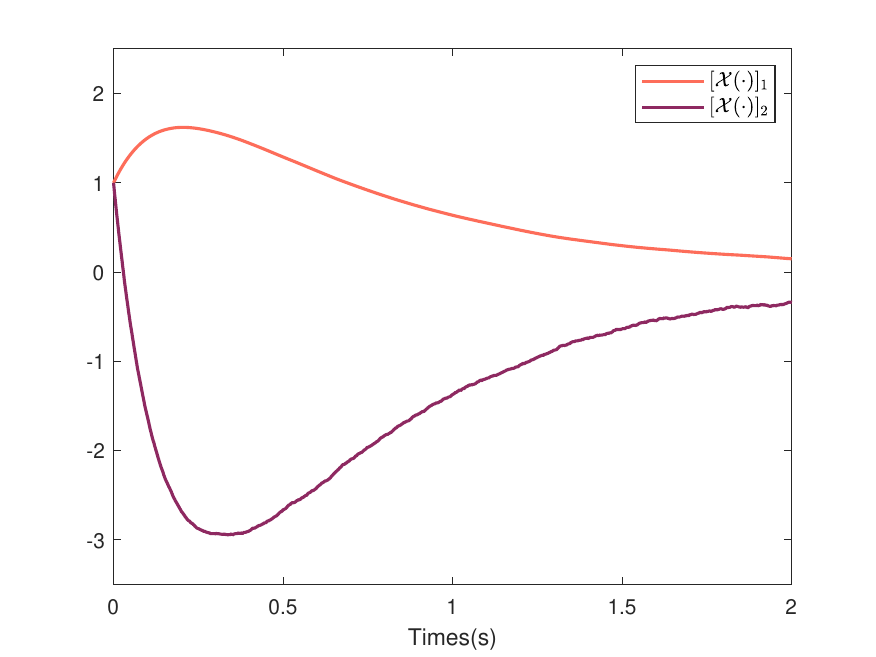}
		\caption{State trajectories of system (7) used in Section \ref{section5.1}.}
	\end{minipage}
\end{figure}

\begin{figure}[!h]
	\begin{minipage}[t]{0.5\linewidth}
		\centering
		\includegraphics[width=\textwidth]{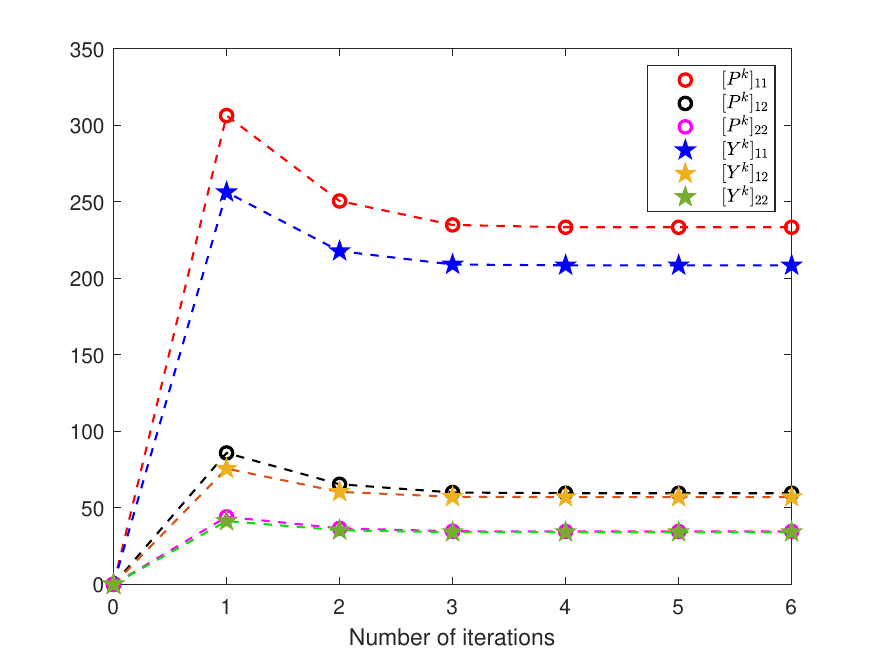}
		\caption{Convergence of matrices $P^k$ and $Y^k$ in Section \ref{section5.1}.}
	\end{minipage}%
	\hfill
	\begin{minipage}[t]{0.5\linewidth}
		\centering
		\includegraphics[width=\textwidth]{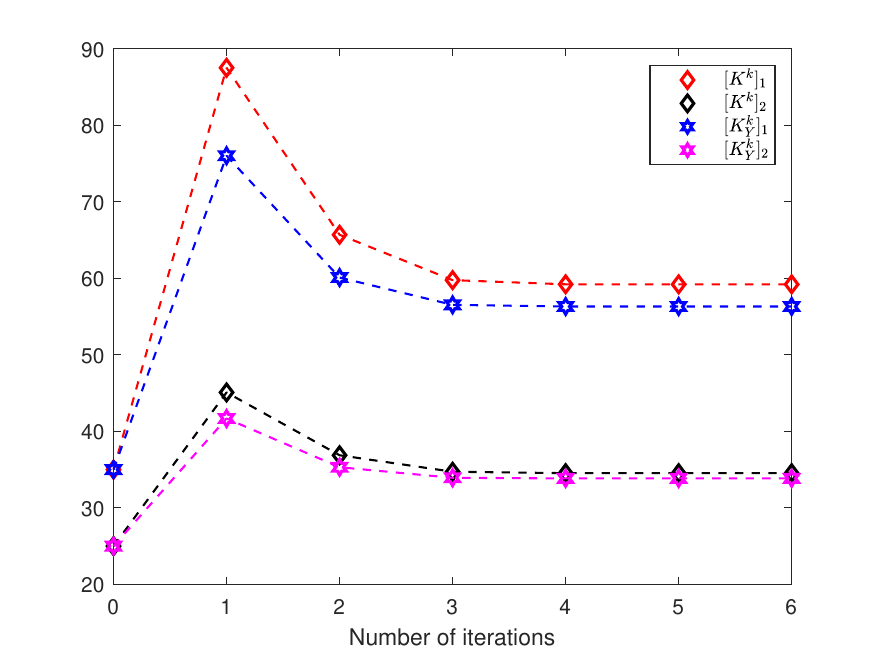}
		\caption{Convergence of matrices $K^k$ and $K^k_Y$ in Section \ref{section5.1}.}
	\end{minipage}
\end{figure}

Figs. 3 and 4 display the convergence of Algorithm \ref{data-driven PI}. After 6 iteration steps, Algorithm \ref{data-driven PI} gives the approximate values of $(P^*,K^*)$ and $(P^*_Y,K^*_Y)$. The results of our algorithm are  
\begin{equation*}
	\begin{split}
		P^6&=\begin{bmatrix}
			233.4279 &  59.5896\\
			59.5896  & 34.5868\\
		\end{bmatrix},
		K^6=\begin{bmatrix}
			59.2254  & 34.5373
		\end{bmatrix},\\
		Y^6&=\begin{bmatrix}
			208.5480 &  57.0425\\
			57.0425 &  34.0963\\
		\end{bmatrix},
		K^6_Y=\begin{bmatrix}
			56.3265 &  33.8537
		\end{bmatrix}.\\
	\end{split}
\end{equation*}

Clearly, the matrices solved by Algorithm \ref{data-driven PI} are close to the true values. This is in good agreement with our theortical results. For comparison purpose, the numerical results of Xu et al. \cite{XuShenHuang2023} are presented in Table 1. It is evident to see from this table that our algorithm has similar performance to theirs. However, as mentioned in Remark 1, the computational complexity of our algorithm is smaller than that of the algorithm in Xu et al. \cite{XuShenHuang2023}.

\begin{table}[H] 
	\centering
	\caption{Comparison between the algorithm in Xu et al. \cite{XuShenHuang2023} and Algorithm \ref{data-driven PI}.} 
	\tabcolsep 27pt
	\begin{tabular*}{\textwidth}{ccc} 
		\hline
		& The algorithm in Xu et al. \cite{XuShenHuang2023} & Algorithm \ref{data-driven PI} \\ 
		\hline
		Final iteration numbers& 6 & 6 \\ 
		\hline
		Relative errors $\frac{|K^6-K^*|}{|K^*|}$ &0.0013 & 0.0012  \\ 
		\hline
		Relative errors $\frac{|K^6_Y-K^*_Y|}{|K^*_Y|}$& 0.0014  & 0.0042 \\ 
		\hline
	\end{tabular*}
\end{table} 

To close this subsection, we focus on the decentralized strategies constructed by the solution of Algorithm \ref{data-driven PI}. Suppose that there are $200$ agents and they adopt decentralized strategies $u_i(\cdot)=-K^6x_i(\cdot)-(K_Y^6-K^6)\widehat{x}_{100}(\cdot)$, $1\leq i\leq 200$, where $\widehat{x}_{100}(\cdot)$ denotes the trajectory of aggregate quantity (\ref{quantity}) simulated by Monte Carlo with $100$ samples. All agents' initial states are randomly generated from $[0,2]\times[0,2]$. Figs. 5 and 6 illustrate the behaviors of the polulation. Furthermore, we plot all agents' average state, which is denoted by  $\widetilde{x}_{200}(\cdot)$,  and the trajectory of $\widehat{x}_{100}(\cdot)$ in Fig. 7. The lines in Fig. 7 show that $\widetilde{x}_{200}(\cdot)$ is close to $\widehat{x}_{100}(\cdot)$, which effectively demonstrate the consistency condition.

\begin{figure}[ht]
	\begin{minipage}[t]{0.5\linewidth}
		\centering
		\includegraphics[width=\textwidth]{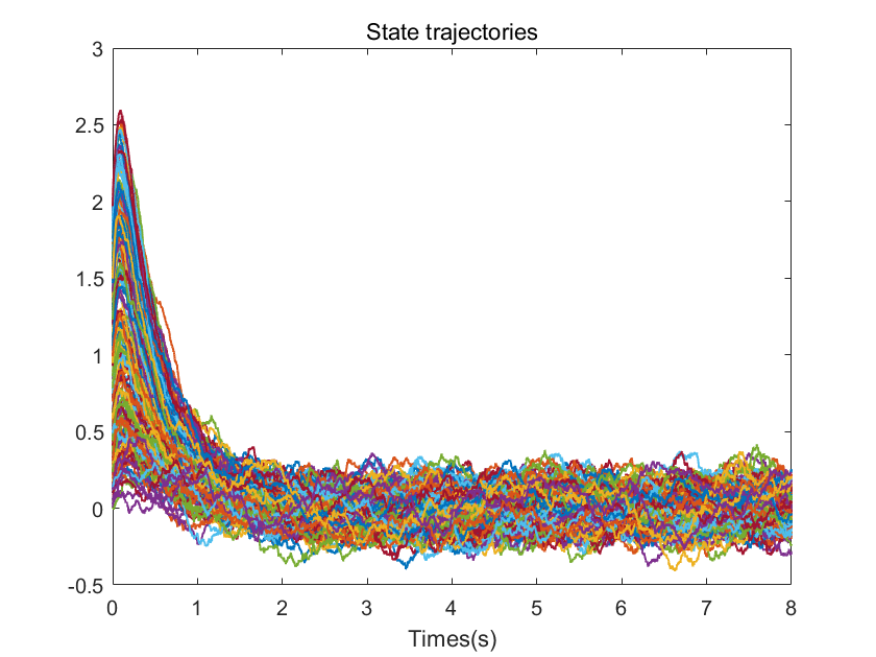}
		\caption{State trajectories $\big[x_i(\cdot)\big]_1$, $1\leq i\leq 200$.}
	\end{minipage}%
	\hfill
	\begin{minipage}[t]{0.5\linewidth}
		\centering
		\includegraphics[width=\textwidth]{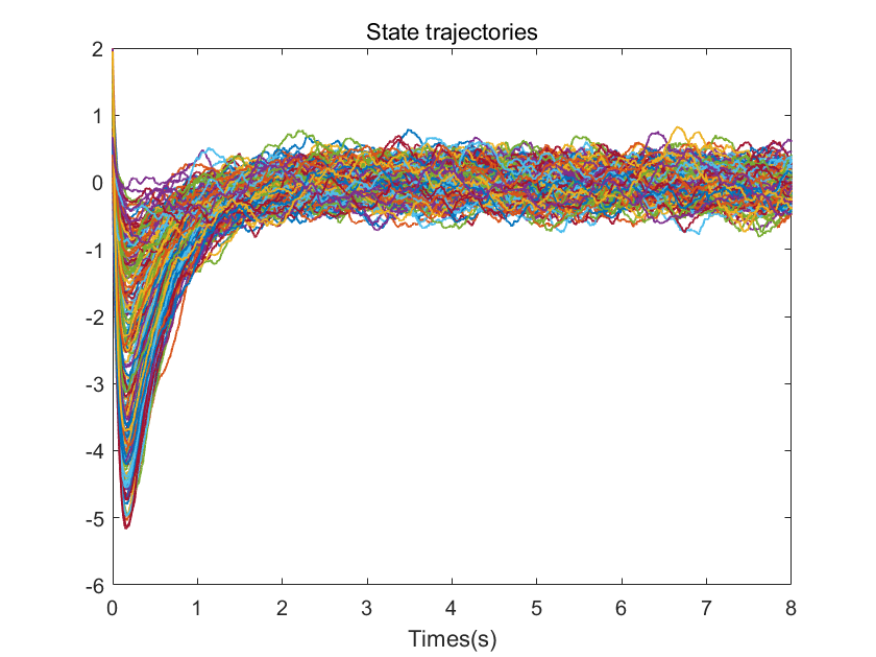}
		\caption{State trajectories $\big[x_i(\cdot)\big]_2$, $1\leq i\leq 200$.}
	\end{minipage}
\end{figure}

\begin{figure}[!h]
		\centering
		\includegraphics[width=0.5\textwidth]{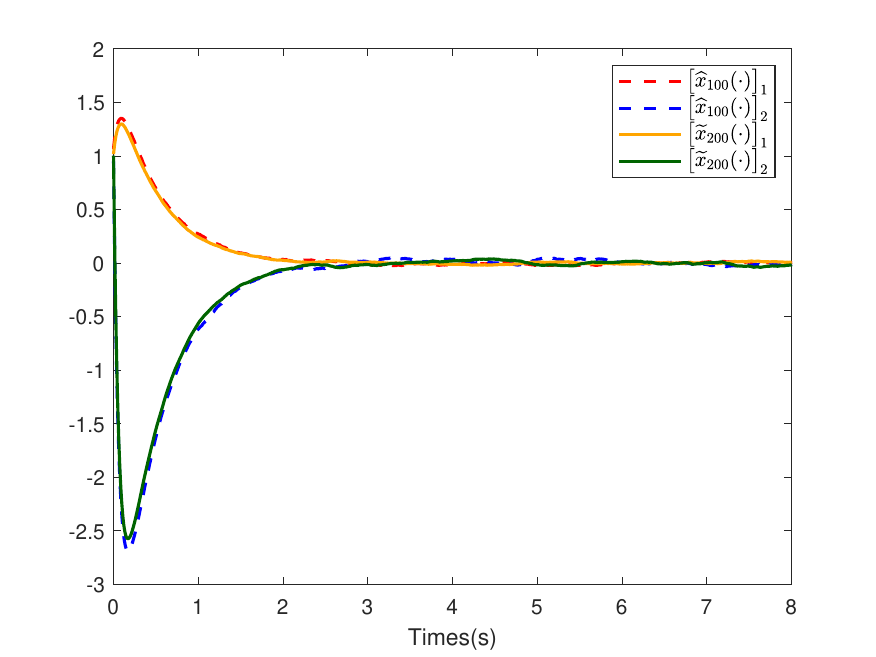}
		\caption{Trajectories of aggregate quantity $\widehat{x}_{100}(\cdot)$ and agents' average state $\widetilde{x}_{200}(\cdot)$.}
\end{figure} 

\subsection{The second numerical example}\label{sec5.2}
As it appears evident,  the first example does not satisfy Assumption \ref{assu2}. In this subsection, we consider an example that satisfies Assumptions \ref{assu1}-\ref{assu2}. The system coefficients are 
\begin{equation*}
	A=\begin{bmatrix}
		 -5 &   1 &        -0.0751\\
		0  & -0.6250 & -39.2699\\
		-0.0045    &     0 &  -0.4127\\
	\end{bmatrix},
	B=\begin{bmatrix}
		1.4542\\
		-0.0154\\
		0.4127\\
	\end{bmatrix},
	C=\begin{bmatrix}
		3 & 0.1\\
		0.5&-2\\
		1&0\\
	\end{bmatrix}.
\end{equation*}
 
The parameters in cost functional are $Q=diag\{5,1,1\}$, $R=1$ and $\rho=0.01$. Since $Y^*=0$ and $K_Y^*=0$, we only focus on calculating the approximate solutions of $P^*$ and $K^*$. For comparison purpose, we present the true values of $P^*$ and $K^*$, which are 
\begin{equation*}
	\begin{split}
		P^*&=\begin{bmatrix}
			0.4976 &   0.1185 &  -1.3229\\
			0.1185  &  0.3377 &  -2.5877\\
			-1.3229 &  -2.5877 &  36.5204\\
		\end{bmatrix},
		K^*=\begin{bmatrix}
			0.1758 &  -0.9008 &  13.1881\\
		\end{bmatrix}.
	\end{split}
\end{equation*}

Next, we will solve this example by Algorithm \ref{data-driven PI} and Algorithm \ref{data-driven VI}, respectively.  To implement Algorithm \ref{data-driven PI}, we set $x_{i0}=[-1,0,1]^T$, $K^0=[-1,-1,14]$, $u_i(t)=-K^0x_i(t)+\sin(-24.6 t)$  and collect $10^{6}$ data samples of a given agent $i$. For Algorithm  \ref{data-driven VI}, we set  $\mathcal{P}^0=0.1\mathbb{I}_n$, $\gamma_k=3/(k+1)$, $u_i(t)=\sin(-6 t)$, $\mathcal{D}_q=\left\{\mathcal{P}\in\mathbb{S}^n_+\big||\mathcal{P}|\leq 100(q+1)\right\}$, $\forall q\in\mathbb{Z}$ and collect $2\times10^{6}$ data samples of agent $i$. The other parameters are set the same as those in Section \ref{section5.1}. The convergence of Algorithms \ref{data-driven PI} and \ref{data-driven VI} is shown in Figs. 8 and 9. 

\begin{figure}[ht]
	\begin{minipage}[t]{0.5\linewidth}
		\centering
		\includegraphics[width=\textwidth]{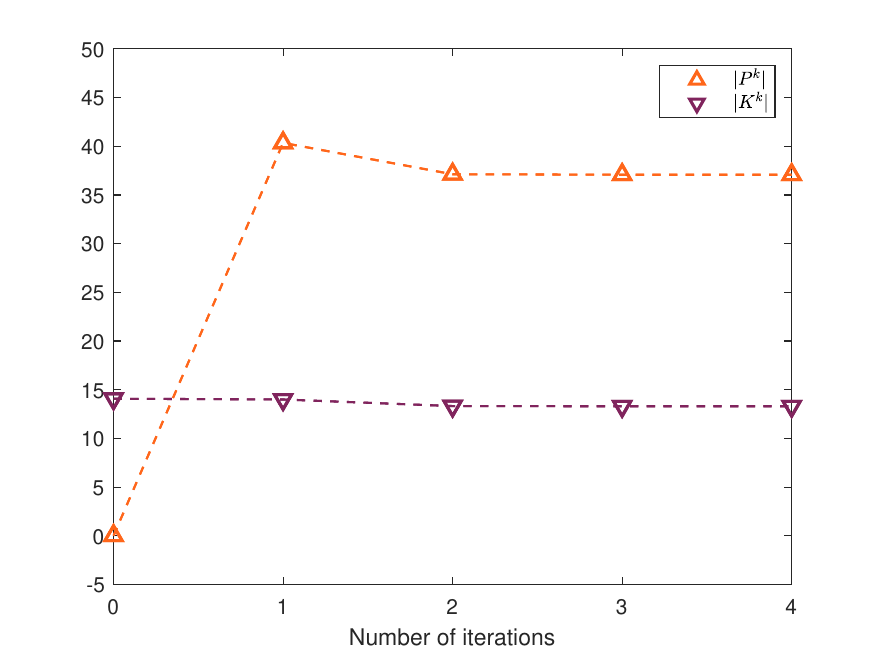}
		\caption{Convergence of Algorithm \ref{data-driven PI} in Section \ref{sec5.2}.}
	\end{minipage}%
	\hfill
	\begin{minipage}[t]{0.5\linewidth}
		\centering
		\includegraphics[width=\textwidth]{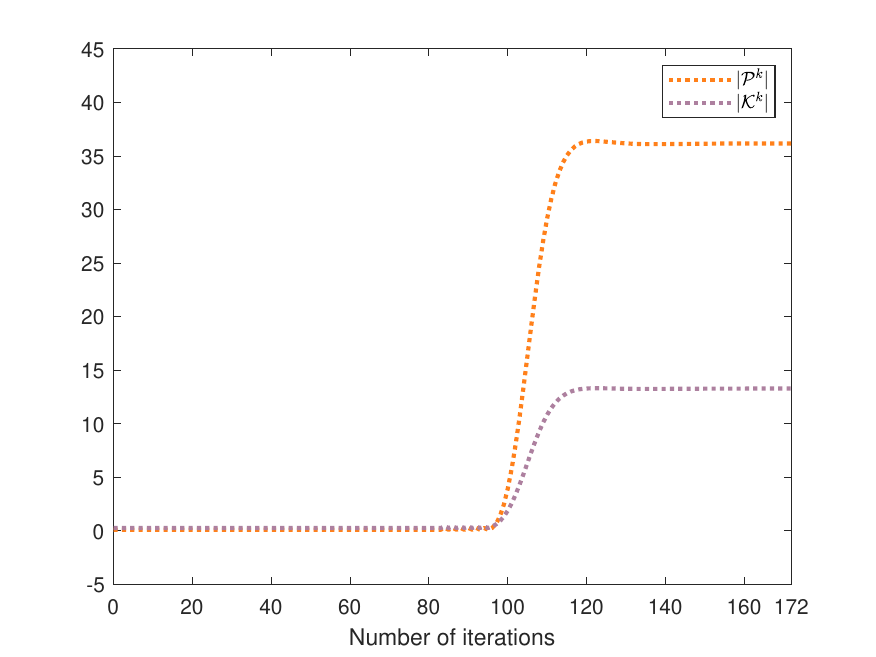}
		\caption{Convergence of Algorithm \ref{data-driven VI} in Section \ref{sec5.2}}
	\end{minipage}
\end{figure}

After 4 iteration steps, Algorithm \ref{data-driven PI} generates the following solution matrices 
\begin{equation*}
	\begin{split}
		P^4&=\begin{bmatrix}
			0.3825  &  0.1836  &  -1.1736\\
			0.1836  &  0.2990  &  -2.6751\\
			-1.1736 &  -2.6751 &   36.8479\\
		\end{bmatrix},
		K^4=\begin{bmatrix}
			0.1416 &  -0.8537 &  13.2655\\
		\end{bmatrix}
	\end{split}
\end{equation*}
with a relative error $|K^4-K^*|/|K^*|=0.0073$. Moreover, Algorithm \ref{data-driven VI} converges after 172 iterations and gives
\begin{equation*}
	\begin{split}
		\mathcal{P}^{172}&=\begin{bmatrix}
			0.3980 &   0.1251  & -1.1929\\
			0.1251 &   0.3397  & -2.5855\\
			-1.1929 &  -2.5855 &  35.9500\\
		\end{bmatrix},
		\mathcal{K}^{172}=\begin{bmatrix}
			0.1524 &  -0.8951 &  13.2542\\
		\end{bmatrix}
	\end{split}
\end{equation*} 
with its relative error  $|\mathcal{K}^{172}-\mathcal{K}^*|/|\mathcal{K}^*|=0.0053$.  It is worth pointing out that the the straight dashed lines in the first half of Fig. 9 indicate that $\mathcal{P}^k\notin\mathcal{D}_q$ in the early iteration steps. This is one of the characteristics of continuous-time VI algorithms. 

According to the above simulation results, it follows that the algorithms developed in this paper may be useful in dealing with LQ mean-field game problems under the setting of completely unknown system coefficients.  Therefore, the proposed algorithm may be more conducive to solving practical mean-field game application problems.

\section{Conclusions and  future works}
This paper is concerned with an LQ mean-field game problem in continuous-time. We develop a system transformation method to implement a model-based PI algorithm and a model-based VI algorithm. The obtained data-driven algorithms permit the construction of decentralized control strategies without system coefficient information and have smaller computational complexities. Moreover, we simulate two numerical examples to verify the effectiveness of our algorithms. The simulation results show that our algorithms successfully find the  $\epsilon$-Nash equilibria with small errors. Thus, the algorithms proposed in this paper may be promising tools in solving continuous-time LQ mean-field games with unknown system parameters. In future works, we want to explore data-driven RL algorithms for  more general LQ mean-field games such as those with jumps, delays and partial information.

\section*{Acknowledgements}
\noindent X. Li acknowledges the financial support by the Hong Kong General Research Fund, China under Grant Nos. 15216720, 15221621 and 15226922. G. Wang acknowledges the financial support from the National Key R\&D Program of China under Grant No. 2022YFA1006103, the National Natural Science Foundation of China under Grant Nos. 61925306, 61821004 and 11831010, and the Natural Science Foundation of Shandong Province under Grant No. ZR2019ZD42. J. Xiong acknowledges the financial support from the National Key R\&D Program of China under Grant No. 2022YFA1006102 and the National Natural Science Foundation of China under Grant No. 11831010.



	
\end{document}